\newtheorem{theorem}{Theorem}[section]
\newtheorem{lemma}[theorem]{Lemma}
\newtheorem{remark}[theorem]{Remark}
\newcommand{\argmin}{\mathop{\rm arg\min}}
\newcommand{\nn}{\nonumber}
\newcommand{\innerp}[1]{\langle {#1} \rangle}
\newcommand{\abs}[1]{\lvert#1\rvert}
\def\R{\mathbb{ R}}
\def\N{\mathbb{ N}}
\def\E{\mathbb{ E}}
\begin{document}

\title{On discrete least square projection in unbounded domain with
random evaluations and its application to parametric uncertainty quantification}

\author{
Tao Tang\!\!
\thanks{Department of Mathematics, The Hong Kong
Baptist University, Kowloon Tong, Kowloon, Hong Kong, China,
Email: {ttang@math.hkbu.edu.hk}}
\qquad
Tao Zhou\!\!
\thanks{Institute of Computational Mathematics and
Scientific/Engineering Computing, AMSS, the Chinese Academy of Sciences, Beijing, China,
Email: {tzhou@lsec.cc.ac.cn}}
}

\date{}

\maketitle

\begin{abstract}
This work is concerned with
approximating multivariate functions in unbounded domain by using
discrete least-squares projection with random points evaluations.
Particular attention are given to  functions with random Gaussian
or Gamma parameters. We first demonstrate that the traditional Hermite (Laguerre)
polynomials chaos expansion suffers from the \textit{instability}
in the sense that an \textit{unfeasible} number of points,
which is relevant to the dimension of the approximation space,
is needed to guarantee the stability in the least square framework.
We then  propose to use the Hermite/Laguerre {\em functions}
 (rather than polynomials) as bases in the expansion. The corresponding design points are obtained by
mapping the uniformly distributed random points in bounded intervals to the unbounded domain, which involved a mapping parameter $L$. By using the Hermite/Laguerre {\em functions}
and a proper mapping parameter,
the stability can be significantly improved even if
the number of design points scales \textit{linearly} (up to a
logarithmic factor) with the dimension of the approximation space.
Apart from the stability, another important issue is the rate of
convergence. To speed up the convergence, an effective scaling factor is introduced, and a principle for
choosing quasi-optimal scaling factor is discussed.
Applications to
parametric uncertainty quantification are illustrated by considering a
random ODE model together with an elliptic problem with lognormal random input.
\end{abstract}

\section{Introduction}
\setcounter{equation}{0}

In recent years, there has been a growing need to model uncertainty in mathematical and physical models and to quantify the resulting effect on output quantities of interest (QoI). Several methodologies for accomplishing these tasks fall under the growing sub-discipline of Uncertainty Quantification (UQ). In general, one can use a probabilistic setting to include these uncertainties in mathematical models. In such a framework, the random input parameters are modeled as random variables; infinite-dimensional analogues leveraging random fields with a prescribed correlation structure extend this procedure to more general settings. Frequently, the goal of this mathematical and computational analysis becomes the prediction of statistical moments of the solution, or statistics of some QoI, given the probability distribution of the input random data.

A fundamental problems in UQ is approximation of a multivariate function $Z=f(x,\mathbf{y})$ where the parameters
$\mathbf{y}=(y_1,y_2,...,y_d)$ are $d$-dimensional random vectors. The function $Z$ might be a solution resulting from a stochastic PDE problem or a derived QoI from such a system. Efficient and robust numerical methods that address such problems have been investigated in detail in recent years (see, e.g. \cite{Hyperbolic1,XiuK1,XiuK2,Ghanem,Xiu,XiuH,FabioC2} and references therein). One of these methods that have enjoyed much attention and success is the generalized Polynomial Chaos (gPC) method,
see, e.g.,  \cite{Xiu,XiuK1,XiuK2,Ghanem}, which is a generalization of the Wiener-Hermite polynomial chaos expansion \cite{Wiener}. In gPC, we expand the solution $Z$ in polynomials of the input random variables $y_i$. When $Z$ exhibits regular variation with respect to $y_i$, gPC yields efficient convergence rates with respect to the polynomial degree of expansion. With intrusive gPC approaches, existing deterministic solvers must be rewritten, and solvers for a coupled
system of deterministic equations are needed, which can be very complicated if the underlying differential equations have nontrivial
nonlinear form, see, e.g.,  \cite{XiuK1,Hyperbolic1,Hyperbolic2}.
By contrast, non-intrusive methods build a polynomial approximation
by leveraging only existing deterministic solvers in a
Monte-Carlo-like fashion.

To efficiently build a gPC approximation, one can resort to the discrete least-squares projection onto a polynomial space.  A major design criterion for this approach is the specification of $\mathbf{y}$-sample locations. There exist a number of popular design grids: randomly generated points, Quasi Monte Carlo points, specially designed points,
etc., see e.g., \cite{NonI,NonII,Point,XUZHOU}. It is known that
obtaining the \textit{optimal} sample design is not straightforward
as demonstrated by a recent comparison work in \cite{Comparison}.
Analysis for the least-squares approach utilizing random points is addressed in several contexts, see e.g., \cite{FabioL2,Cohen,XUZHOU}. Generally speaking, the least square approach is stable when the number of sample points behaves quadratically with the dimension of the approximation space. This quadratic condition can be weakened
if we seal with the Chebyshev measure \cite{Cohen2014}.

Note that all the above results are for random parameters
in {\em bounded} domains. As far as we have known,
there is no exhaustive investigations for problems in
unbounded domains, i.e., for functions $f(\mathbf{y})$
with Gaussian or Gamma random parameters.
In this paper, we will consider the problem of approximating
functions with Gaussian or Gamma random parameters by
using discrete least-squares projection
with random points evaluations.
In this case, the traditional approach is to use the so-called
Hermite or Laguerre chaos expansions, where the collocation points
with respect to the Gaussian or Gamma measure will be generated.
However, we will show that such an approach suffers from
the \textit{instability} in the sense that the corresponding
design matrices in the least square approach are well
conditioned \textit{only} when the number of random points
is \textit{exponentially} related to the dimension of
the approximation space, i.e. the number of random points
equals to $(\#\Lambda)^{c\#\Lambda}$ with $\#\Lambda$ being
the dimension of the approximation space. This is obviously
\textit{unacceptable} for practical computations.

To improve the stability we will propose to use
the Hermite (Laguerre) {\em function} approximation
to replace the Hermite (Laguerre) polynomial approach.
Then the mapped uniformly distributed random points are used
to control the condition number of the design matrix.
By choosing a suitable mapping parameter, it is
demonstrated numerically that these two strategies will make the
condition number small
provided that the number of design points is \textit{linearly}
proportional to the dimension of the approximation space.
This stability result is further justified by a theoretical
proof.

The rate of convergence is another serious issue. In fact,
approximating a function by Hermite polynomials or functions
was rejected by Gottlieb-Orszag (\cite{Got77}, pp. 44-45). They
pointed out that
{\em to study the rate of convergence of Hermite series,
we consider the expansion of $\sin(x)$ ...  The result is very bad:
to resolve $M$ wavelengths of $\sin(x)$ requires nearly
$M^2$ Hermite polynomials!  Because of the poor resolution
properties of Hermite polynomials the authors doubt they
will be of much practical value in applications of spectral methods.}

How to improve the resolution property of the Hermite
expansion methods? One remedy is to use the so-called scaling
factor which expands the underlying function by $h_n(\alpha x)$
instead of $h_n(x)$, where $\alpha >0$ is a properly chosen constant. In \cite{Scaling},
a scaling factor formula combining the size of the
solution decay rate and the roots of $h_N(x)$ is proposed,
where $N$ is  the largest expansion term in the Hermite
spectral expansion. Numerical analysis based on asymptotic analysis
numerical experiments demonstrate that the use of the scaling factor
can indeed provide a significant improvement over the observation
on Gottlieb and Orszag.
The theoretical justification of the use of
the scaling factor proposed in \cite{Scaling}
was made in \cite{Guo,Sun}. In particular,
Hermite spectral methods are investigated in \cite{Sun} for linear
diffusion equations and nonlinear convection-diffusion equations in
unbounded domains. When the solution domain is unbounded,
the diffusion operator no longer has a compact resolvent,
which makes the Hermite spectral methods {\em unstable}.
To overcome this difficulty, a time-dependent scaling factor
is employed in the Hermite expansions, which yields a positive
bilinear form. As a consequence, stability is recovered and
spectral convergence speed is significantly enhanced.
In fact, in the past ten years, the use of the scaling factor
proposed in \cite{Scaling} has been used in many areas
including computational optics \cite{xx1},
computational astrophysics \cite{xx2},
etc. In particular, the
scaling factor formula is included in the recent MATLAB code GSGPEs
\cite{CPC13}.

When studying uncertainty using the gPC methods,
Karniadakis, Xiu etc pointed out in \cite{Xiu0,XiuK1} that
the relatively poor resolution properties of Hermite and
Laguerre expansions are well documented in \cite{Got77}.
They further pointed out
the re-scaling procedure as done in \cite{Scaling}
can be employed to accelerate convergence.
However, the progress of using the scaling factor for
the UQ problems has not been big. This is one of the main
motivations of the present work. In this work, we will
introduce suitable scaling factors to speed up the convergence.
Applications to parametric UQ are discussed by considering
random ODE models and elliptic type problems
with lognormal random input. A number of
numerical examples are provided to confirm the efficiency of
the Hermite (Laguerre) function approach with the use of the
scaling factors.

We summarize here the distinct features of our approach:
\begin{itemize}
\item
We investigate the discrete least square approach for
functions with Gaussian or Gamma random parameters;
applications to UQ are discussed.
\item
We propose to use the Hermite (Laguerre) functions
as the approximation bases, which is different with
the traditional Hermite (Laguerre) polynomials.
Stability is guaranteed with acceptable number of evaluation points
and relevant theoretical justification is provided.
\item
We introduce the scaling factor in the least square
approach to speed up the convergence, and a principle for
choosing the scaling is provided. The numerical results
indicate that the use of the proposed scaling factor
is indeed very useful.
\end{itemize}

The rest of this paper is organized as follows. In section 2,
we introduce the approximation problem of a function
in $d$-dimensions by discrete least-square projection.
Some commonly used high dimensional approximation spaces
are discussed. We also show that the Hermite (Laguerre) gPC
expansions need an unacceptable number of evaluation points
to guarantee the stability. In Section 3, we propose to use
the Hermite (Laguerre) function approach.
Stability under this approach is ensured with the use
of the mapped uniform random points. Moreover,
a useful scaling factor is introduced to speed up the convergence.
Applications to parametric UQ are discussed in Section 4.
Some conclusions will be drawn in the final section.

\section{The least square projection}
\setcounter{equation}{0}

In this section, we follow closely the works
\cite{FabioL2,Cohen,XUZHOU} to give a basic introduction for the discrete least-squares approach, however, please note that we shall focus on problems in unbounded domains.

Let $\mathbf{y}=(y_1, \cdot\cdot\cdot, y_d)^T$ be a vector with $d$ random variables, which takes values in  $\Gamma\equiv \mathbb{R}^d$ or $\Gamma\equiv \mathbb{R}_+^d.$ We will focus on the cases where $\{y_i\}_{i=1}^d$ are  Gaussian random variables ($\Gamma\equiv \mathbb{R}^d$) or  Gamma random variables ($\Gamma\equiv \mathbb{R}_+^d$).  We suppose that the variables $\{y_i\}_{i=1}^d$ are independent with marginal probability density function (PDF) $\rho_i$ for each random variable $y_i.$  The joint PDF is given by $\rho(\mathbf{y})= \prod_{i=1}^d \rho_i(y_i): \Gamma \rightarrow \mathbb{R}^+$.

Assume that the functions considered in this paper are in the space $L_\rho^2$ endowed with the norm
\begin{equation}
||f||_{L^2_\rho}= \E \left[f^2(\mathbf{y})\right] = \Big(\int_\Gamma f^2(\mathbf{y}) \rho(\mathbf{y}) d\mathbf{y}\Big)^{1/2}.
\end{equation}
The purpose is to efficiently build a finite dimension approximation of $f(\mathbf{y})$ or some general functionals $g \!\circ\! f$ associated with $f(\mathbf{y}).$  To this end, we first choose the
one-dimensional orthogonal bases (not only limited to polynomials)
with respect to each random variable $y^i$:
$$\{\phi^i_j\}_{j=1}^{\infty} \in L_\rho^2, \quad  i=1,...,d,$$
where  $\phi^i_j$ is called the $j$-th order basis. Then the multi-dimensional bases can be formed by tensorizing the univariate bases $\{\phi^i_j\}_{j=1}^{\infty}.$
To explicitly form these bases, let us first define the following multi-index:
\begin{align*}
 \mathbf{n}=(n_1,\cdot\cdot\cdot,n_d)\in \mathds{N}^d, \quad \textrm{with} \quad  |\mathbf{n}|=\sum_{i=1}^d n_i.
\end{align*}
Define the $d$-dimensional bases $\mathbf{\Phi}_\mathbf{n}$ as
\begin{equation}
\mathbf{\Phi}_\mathbf{n}(\mathbf{y})=\prod_{i=1}^d \phi^i_{n_i}(y_i),
\end{equation}
where $\{\phi^i_{n_i}\}_{n_i=1}^\infty$ is the one-dimension basis.
Let $\Lambda\subset \mathds{N}^d$ be a finite multi-index set, and denote by $N:=\# \Lambda$ the cardinality of an index set $\Lambda.$ The finite dimensional approximation space defined by $\Lambda$ is given by
\begin{align*}
\mathbf{P}^\Lambda :=\textmd{span}\{\mathbf{\Phi}_\mathbf{n}(\mathbf{y}), \,\,\mathbf{n}\in \Lambda \}.
\end{align*}
Throughout the paper, the best approximation of $f(\mathbf{y})$ in $\mathbf{P}^\Lambda$ will be denoted by $P^\Lambda f,$ namely,
\begin{align}\label{eq:best}
P^\Lambda f := \argmin_{p\in \mathbf{P}^\Lambda} \|f-p\|_{L^2_\rho}.
\end{align}
A formula for the best approximation $P^\Lambda f$ involves standard Fourier coefficients with respect to the $\Phi_n$, but these coefficients require high-order moment information for the function $f$ and in general cannot be computed explicitly.

Alternatively, we consider the construction of such an
approximation $f^\Lambda \in \mathbf{P}^\Lambda$ for the function $Z=f(\mathbf{y})$ by the least-squares approach. To this end,
we compute the exact function values of $f$ at $\mathbf{y}_1,
\ldots , \mathbf{y}_m\in \R^d$ with  $m> N$, and then find
a discrete least-squares approximation $f^\Lambda$ by requiring
\begin{align}\label{eq:least}
f^\Lambda= P^\Lambda_m f = \argmin_{p\in \mathbf{P}^\Lambda}  \sum_{k=1}^m\left(p(\mathbf{y}_k)-f(\mathbf{y}_k)\right)^2.
\end{align}
We introduce the discrete inner product
\begin{align}\label{eq:least_norm}
\innerp{u, v}_m= \sum_{k=1}^m u(\mathbf{y}_k)v(\mathbf{y}_k).
\end{align}

\begin{remark}\label{rm:non_poly}
{\rm
We remark that usually the $L^2_\rho$-best approximation
polynomial is chosen as the approximation bases,
which yields the so-called gPC  method. For example,
the Hermite polynomials are used for functions with Gaussian
parameters, and the Leguerre polynomials are suitable
for functions with Gamma parameters, and so on \cite{XiuK2}.
In such gPC expansions, a natural way to choose the design points
is the random sampling method, that is, the random samples are
generated with respect to $\mathbf{\rho}.$
Of course, other kinds (non-polynomials) of
orthogonal bases can be used in the least-square approach.
}
\end{remark}

\subsection{Multivariate approximation spaces}

Given a basis order $q$ and the dimension parameter $d\in \N,$
define the following index sets
\begin{eqnarray}
&& \Lambda_{\bf P}^{q,d}:=\{{\bf n}=(n_1,\ldots,n_d)\in \N^d:\max_{j=1,\ldots,d}n_j\leq q \},
\\
&& \Lambda_{\bf D}^{q,d}:=\{{\bf n}=(n_1,\ldots,n_d)\in \N^d:\abs{{\bf n}}\leq q \}.
\end{eqnarray}
The traditional tensor product (TP) space is defined as
\begin{equation}
\mathbf{P}_q^d \,\,:=\,\, {\rm span} \big\{\mathbf{\Phi}_\mathbf{n}(\mathbf{y}):  \mathbf{n}\in \Lambda_{\bf P}^{q,d} \big\}.
\end{equation}
That is, we require in $\mathbf{P}_q^d$ that the basis order in each
variable less than or equal to $q$. A simple observation is that the dimension of $\mathbf{P}_q^d$ is
\begin{equation}
{\rm dim}(\mathbf{P}^d_q)=\#\Lambda_{\bf P}^{q,d}=(q+1)^d.
\end{equation}
Note that when $d\gg 1$ the dimension of TP spaces grows very quickly with respect to the degree $q$, which  is the so-called \textit{curse of dimensionality}. As a result, the TP spaces are rarely used in practice when $d$ is large. Alternatively,
when $d$ is large, the following total degree (TD) space is often
employed instead of using the TP space \cite{FabioC2,XiuH}:
\begin{equation}
\mathbf{D}^d_q \,\,:=\,\, {\rm span} \big\{\mathbf{\Phi}_\mathbf{n}(\mathbf{x}):  \mathbf{n}\in \Lambda_{\bf D}^{q,d} \big\}.
\end{equation}
The dimension of $\mathbf{D}^d_q $ is
\begin{equation}
{\rm dim}(\mathbf{D}^d_q)=\#\Lambda_{\bf D}^{q,d}={q+d\choose d}.
\end{equation}
It is seen that the growth of the dimension of $\mathbf{D}_q^d$ is much slower than that of $\mathbf{P}_q^d$.

\begin{remark}\label{rm:Polynomial}
{\rm
We remark that the TP and TD spaces are originally
defined for polynomial spaces. However, spaces based on
general one-dimensional bases can be constructed using the same
way. Consequently, we will still use the names of
TP and TD for the spaces with general bases. Moreover,
other types of multi-variate approximation spaces
can be constructed in a similar way, e.g.,
the hyperbolic cross \cite{Cohen2014}.
}
\end{remark}

\subsection{Algebraic formulation}
Consider the approximation in the space $\mathbf{P}^\Lambda=\textmd{span}\{\mathbf{\Phi}_{\bf n}\}_{{\bf n}\in \Lambda}$ with random samples $\{\mathbf{y}_k\}_{k=1}^{m}.$
If we choose a proper ordering scheme for the multi-index, we can order the multi-dimensional bases via
a single index.
 For example, we can  arrange the index set $\Lambda$ in the lexicographical order, namely, given $\mathbf{n}^\prime, \mathbf{n}^{\prime\prime} \in \Lambda$
\begin{equation}
\mathbf{n}^\prime < \mathbf{n}^{\prime\prime} \Leftrightarrow
 \big[\; |\mathbf{n^\prime}| < |\mathbf{n^{\prime\prime}}|\;
\big] \vee
\big[ \left(\; |\mathbf{n^\prime}| = |\mathbf{n^{\prime\prime}}|\;\right) \wedge
\left( \exists \,j \,:\, n^\prime_j < n^{\prime\prime}_j \wedge (n^\prime_i = n^{\prime\prime}_i, \,\, \forall i< j) \right) \big].
\end{equation}
Then the space $\mathbf{P}^\Lambda$ can be rewritten as
 $\mathbf{P}^\Lambda=\textmd{span}\{\{\mathbf{\Phi}_{\bf n}\}_{j=1}^N\}$ with $N=\#\Lambda$.
The least square solution can be written in
\begin{equation}
f^\Lambda=\sum_{j=1}^{N} c_j \mathbf{\Phi}_j,
\end{equation}
where $\mathbf{c}= (c_1,...,c_{N})^\top$ is the coefficient vector.
The algebraic problem to determine the unknown coefficient $\mathbf{c}$ can be
formulated as:
\begin{align}\label{eq:le}
\mathbf{c}=\argmin_{\mathbf{z}\in \mathbb{R}^{N}} ||\mathbf{D}\mathbf{z}-\mathbf{b}||_2,
\end{align}
where
\begin{align*}
\mathbf{D}=\Big( \mathbf{\Phi}_j(\mathbf{y}_k)\Big),
\quad j=1,...,N,\,\, k=1,...,m,
\end{align*}
and
$\mathbf{b}=[f({\mathbf y}_1),\ldots,f({\mathbf y}_m)]^\top$ contains the evaluations of the target function $f$ in the collocation points.
The solution to the least squares problem  (\ref{eq:le}) can also be computed by
solving an $N \times N$ system, namely,
\begin{align}\label{eq:solution}
\mathbf{A} \mathbf{z} &= \mathbf{f}
\end{align}
with
\begin{align}\label{eq:components}
\mathbf{A}:=\mathbf{D}^\top\mathbf{D}= \Big(\innerp{ \mathbf{\Phi}_i, \mathbf{\Phi}_j}_m \Big)_{i,j=1,...,N}, \quad \mathbf{f}:=\mathbf{D}^\top\mathbf{b}=\Big(\innerp{  f, \mathbf{\Phi}_j}_m \Big)_{j=1,...,N}.
\end{align}
For the computation point of view, we can solve problem (\ref{eq:le}) by using the QR factorization. Alternatively, we can also solve (\ref{eq:solution}) by the Cholesky factorization.

\subsection{The Hermite (Laguerre) chaos expansion: stability issue}
As was discussed in Remark \ref{rm:non_poly}, a nature way to approximate functions with Gaussian (Gamma) parameters is the Hermite (Laguerre) chaos expansion. In this section, we shall show, by numerical examples, that the least square projection with Hermite (Laguerre) polynomials expansion is unstable, in the sense that an unfeasible number of random points, i.e.,
\[
m=(\#\Lambda)^{c\#\Lambda},\]
are needed to guarantee the stability.

To this end, let us remind that the one-dimensional normalized Hermite polynomials $\{H_k(y)\}_{k=0}^\infty,$ defined on the whole line $\mathbb{R}:=(-\infty,+\infty),$ are orthogonal with respect to the weight function
\begin{equation}
\rho^G(y)=\textmd{e}^{-y^2},
\end{equation}
 namely,
\begin{equation}
\int_{-\infty}^{+\infty} \rho^G(y) H_m(y)H_n(y) dy = \delta_{mn}.
\end{equation}
We denote by $\mathbf{H}_\mathbf{n}(\mathbf{y})$ the multi-variate hermite polynomial with multi-index $\mathbf{n},$ which obtained by tensorized the one-dimensional Hermite polynomials. Then, a natural way to approximate a multivariate function $f^G(\mathbf{y})$ with Gaussian parameters $\mathbf{y}$ is
\begin{equation}\label{eq:hermite}
f^G(\mathbf{y})=\sum_{n} c_\mathbf{n} \mathbf{H}_\mathbf{n}(\mathbf{y}), \quad \mathbf{n} \in \Lambda,
\end{equation}
where $\Lambda $ is the index set that can be either $\Lambda_{\bf P}^{q,d}$ or $\Lambda_{\bf D}^{q,d}.$

Similarly, for a function $f^E(\mathbf{y})$ with Gamma random
parameters $\mathbf{y},$  a nature bases for such an expansion
would be the tensorized Laguerre polynomials
$\mathbf{L}_\mathbf{n}$ that are orthogonal with respect to the
weight function $\rho^E(\mathbf{y})=\prod_{i=1}^d
\textmd{e}^{-y_i}$. More precisely, we expand
\begin{equation}\label{eq:Laguerre}
f^E(\mathbf{Y})=\sum_{n} c_\mathbf{n} \mathbf{L}_\mathbf{n}
(\mathbf{y}), \quad \mathbf{n} \in \Lambda.
\end{equation}
Note that we consider here a special type of Gamma random parameters $y,$ for which the PDF yields $\rho(y)=\textmd{e}^{-y}.$ Such random variables are also referred to the exponential random variables.
More general types of Gamma random parameters with PDF
\begin{equation}
\rho^{E}(y)=\frac{\beta^\alpha y^{\alpha-1}\textmd{e}^{-\beta y}}
{\Gamma(\alpha)}
\end{equation}
 can be considered in a similar way, and the corresponding chaos
expansion is the generalized Laguerre chaos expansions.

In the least square framework, to construct the expansions (\ref{eq:hermite}) and (\ref{eq:Laguerre}), a natural choice of the collocation points $\{\mathbf{y}\}_{i=1}^m$ is to generate random points according to the Gaussian (Gamma) measure.  In both cases, we can obtain the corresponding design matrices $\mathbf{A}^G$ and $\mathbf{A}^E$, respectively.

\begin{figure}[t]
\begin{center}
\includegraphics[width=0.43\textwidth]{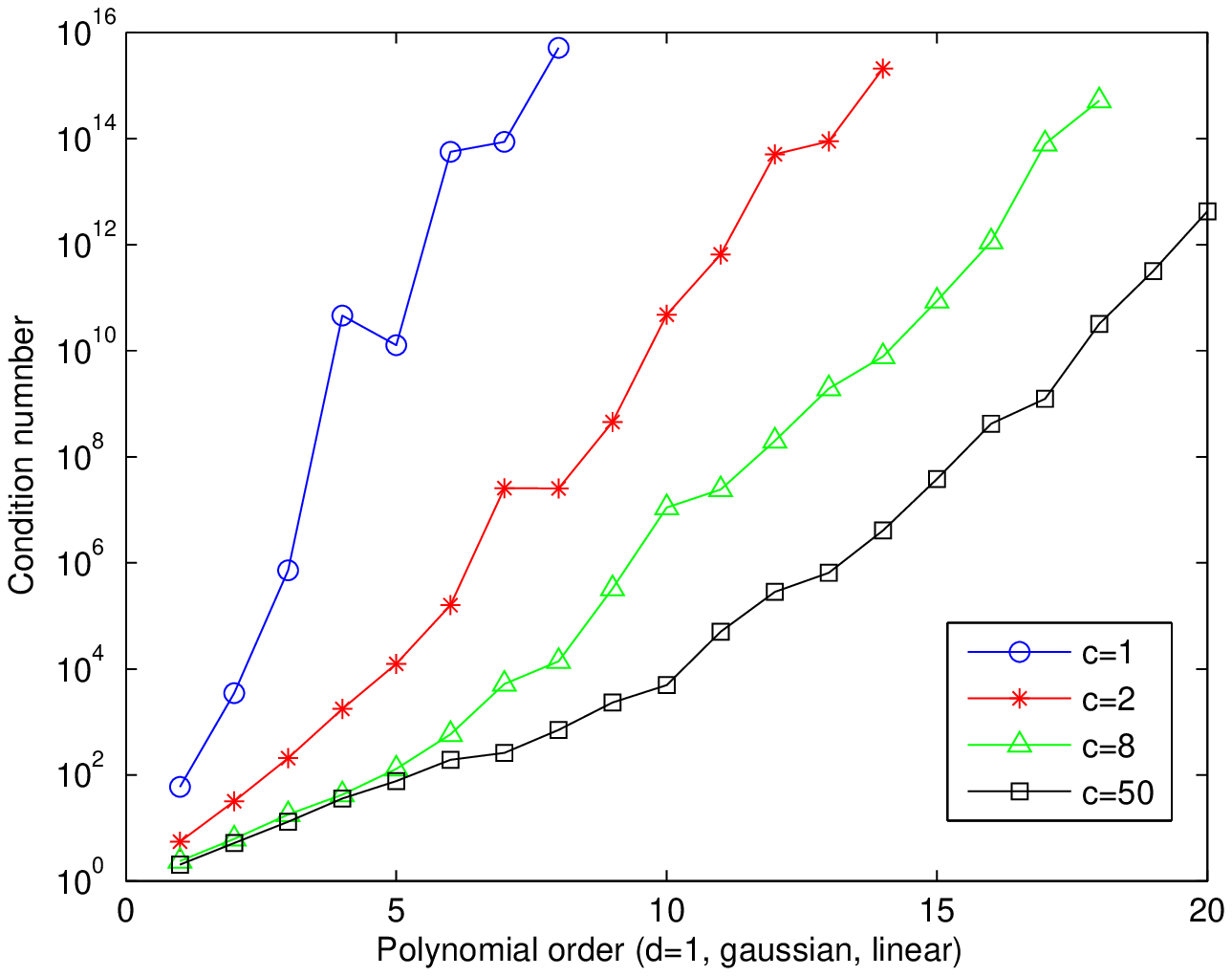}
\includegraphics[width=0.43\textwidth]{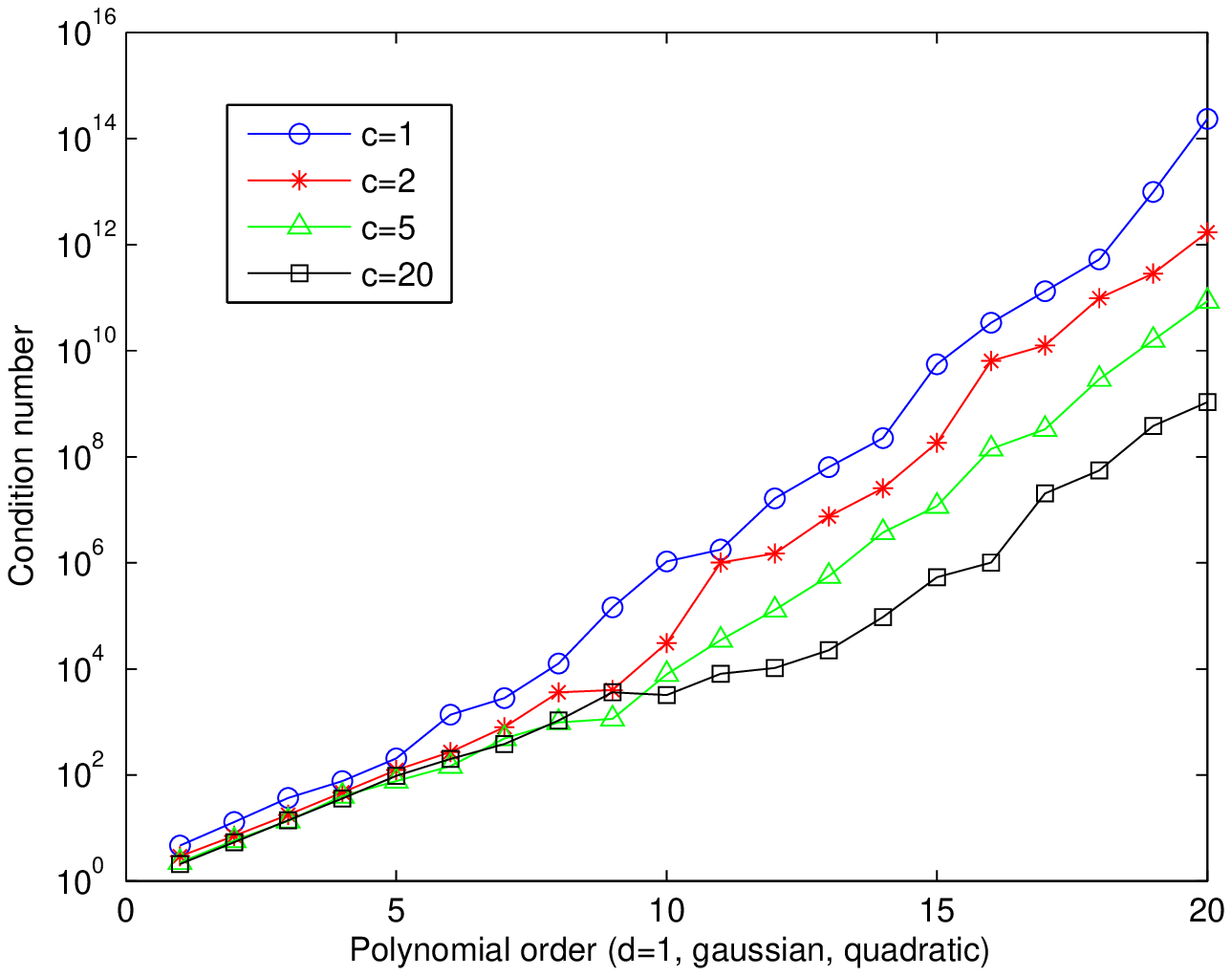}
\end{center}
\begin{center}
\includegraphics[width=0.43\textwidth]{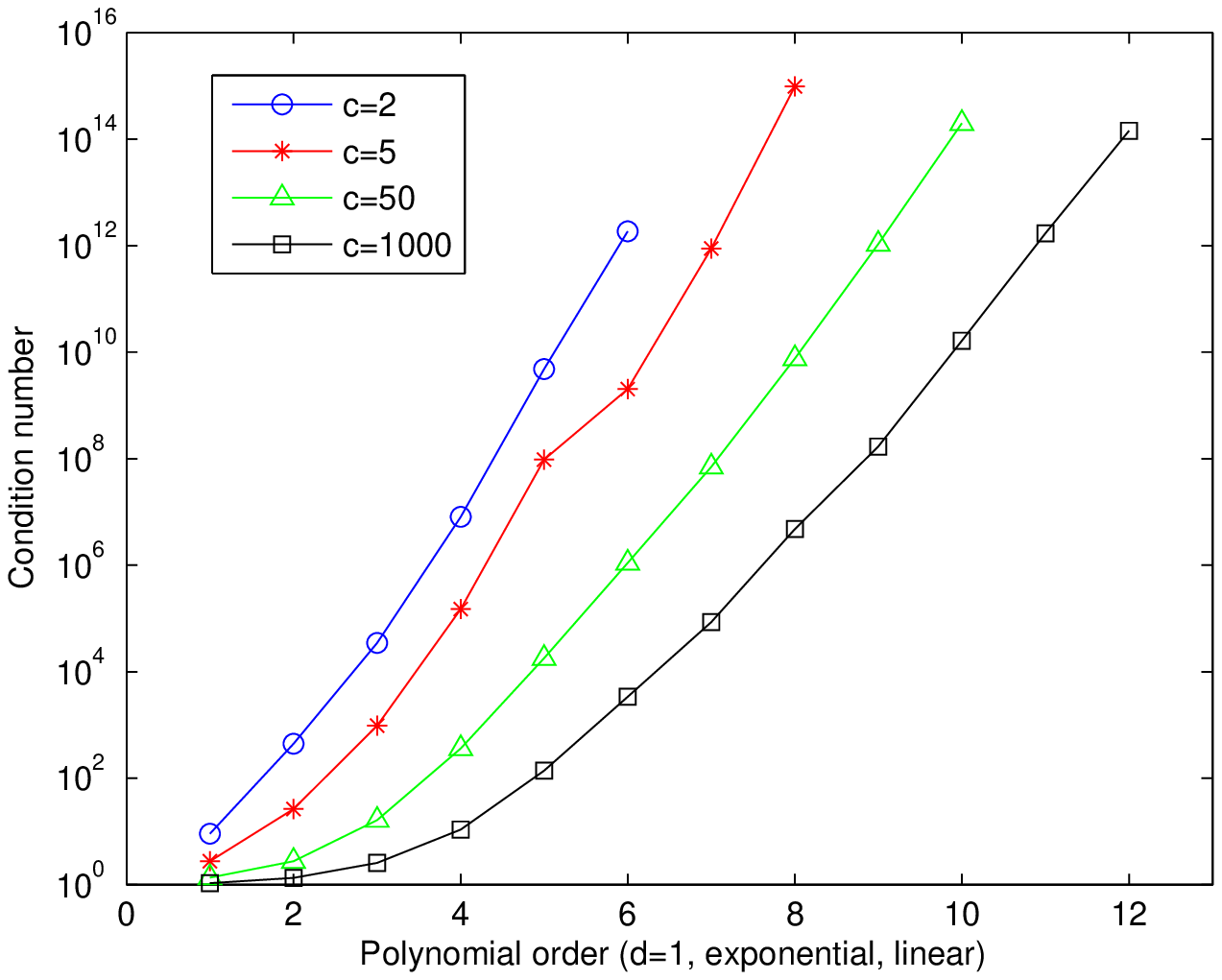}
\includegraphics[width=0.43\textwidth]{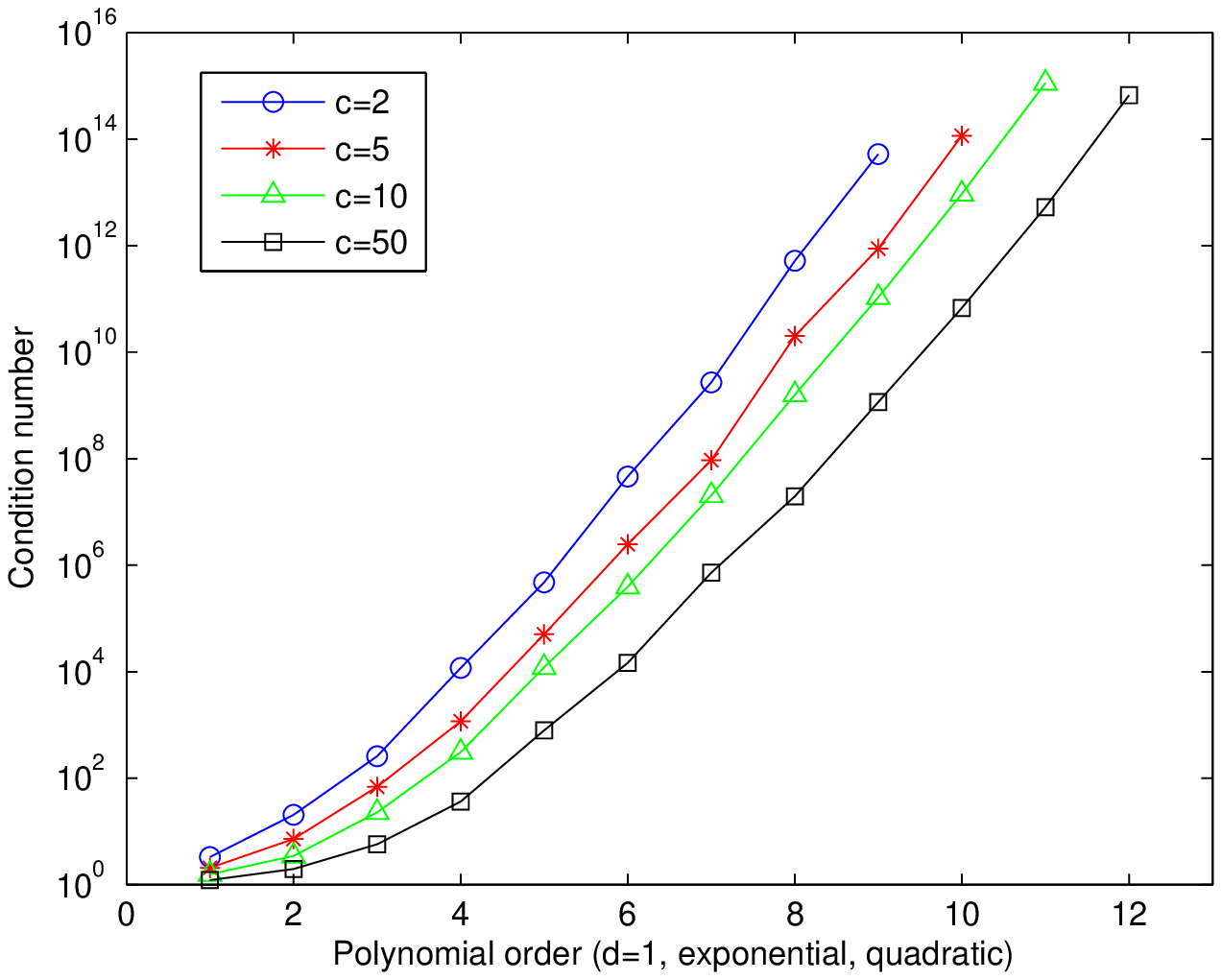}
\end{center}
\caption{\small
Condition numbers with respect to polynomial order in
1D case, with left for $m=c(\#\Lambda)$ and  right
for $m=c(\#\Lambda)^2.$
Top: Gaussian case; Bottom: Gamma case.}
\label{fig1_condition}
\end{figure}

We remark that for problems in \textit{bounded} domains,
e.g., the uniform random parameters in $[-1,1]$,
the relevant tests have been done by many researchers,
see, e.g.,  \cite{FabioL2,Cohen,Cohen2014,XUZHOU}.
For instance for the uniform measure in $[-1,1],$ it is  known that a quadratic
dependence of the number of random points, i.e.
$m=c (\# \Lambda)^2$, is sufficient to guarantee
the stability of the least square approach. Moreover, if
the Chebyshev measure is considered, fewer points are needed
to guarantee the stability \cite{Cohen2014}.

What is the difference if the underlying domain is
unbounded? The answer is quite negative: the  $m=c (\# \Lambda)^2$
quadratic random points cannot guarantee the stability.

We will demonstrate the above claim by testing the condition number of the design matrices, i.e.,
\begin{equation}
\textmd{cond}(\mathbf{A})=\frac{\sigma_{max}(\mathbf{A})}{\sigma_{min}(\mathbf{A})},  \quad \mathbf{A}=\mathbf{A}^G \,\,\textmd{or}\,\,\mathbf{A}^E.
\end{equation}
Let us first consider the Hermite chaos expansion
(\ref{eq:hermite}). In this case, the random points
are generated with respect to the Gaussian measure.
Note that the design matrix is a random matrix. Therefore,
in the computations we will repeat the test for 100 times,
and the \textit{mean} condition number will be reported.
In Fig. \ref{fig1_condition}, the growth of
condition numbers with respect to the polynomial order
is shown for the one-dimensional case. It is noted that
the condition number admits an exponential growth with
respect to the polynomial order, for both the linear
dependence $m=c (\# \Lambda) $ (left) and the quadratic
dependence $m=c(\# \Lambda)^2$ (right)
cases. In fact, similar tests with the dependence
$m=c (\# \Lambda)^\nu $ with $3 \leq \nu \leq 5$
produce results similar to those in Fig.
\ref{fig1_condition}.

\begin{figure}[h]
\begin{center}
\includegraphics[width=0.43\textwidth]{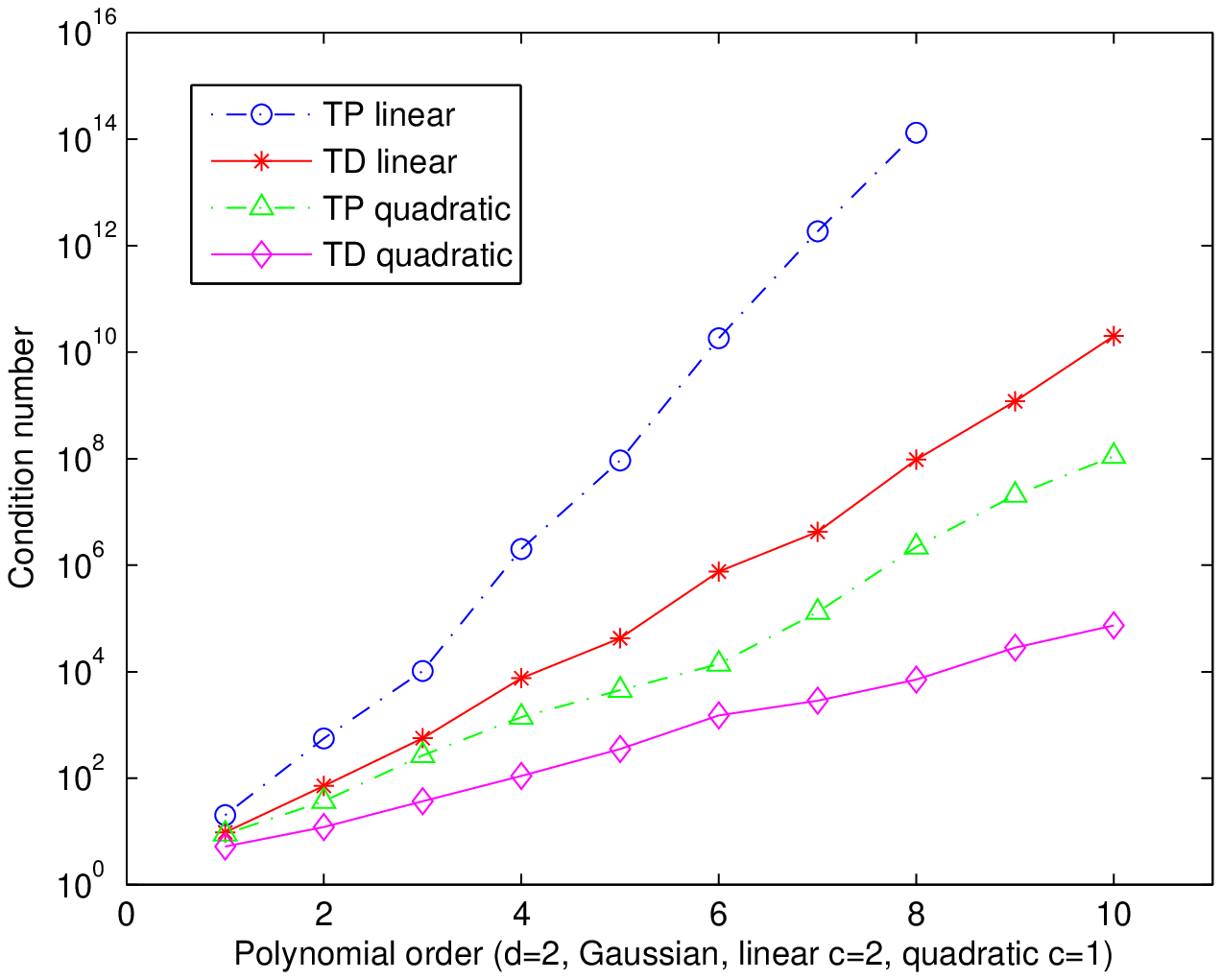}
\includegraphics[width=0.43\textwidth]{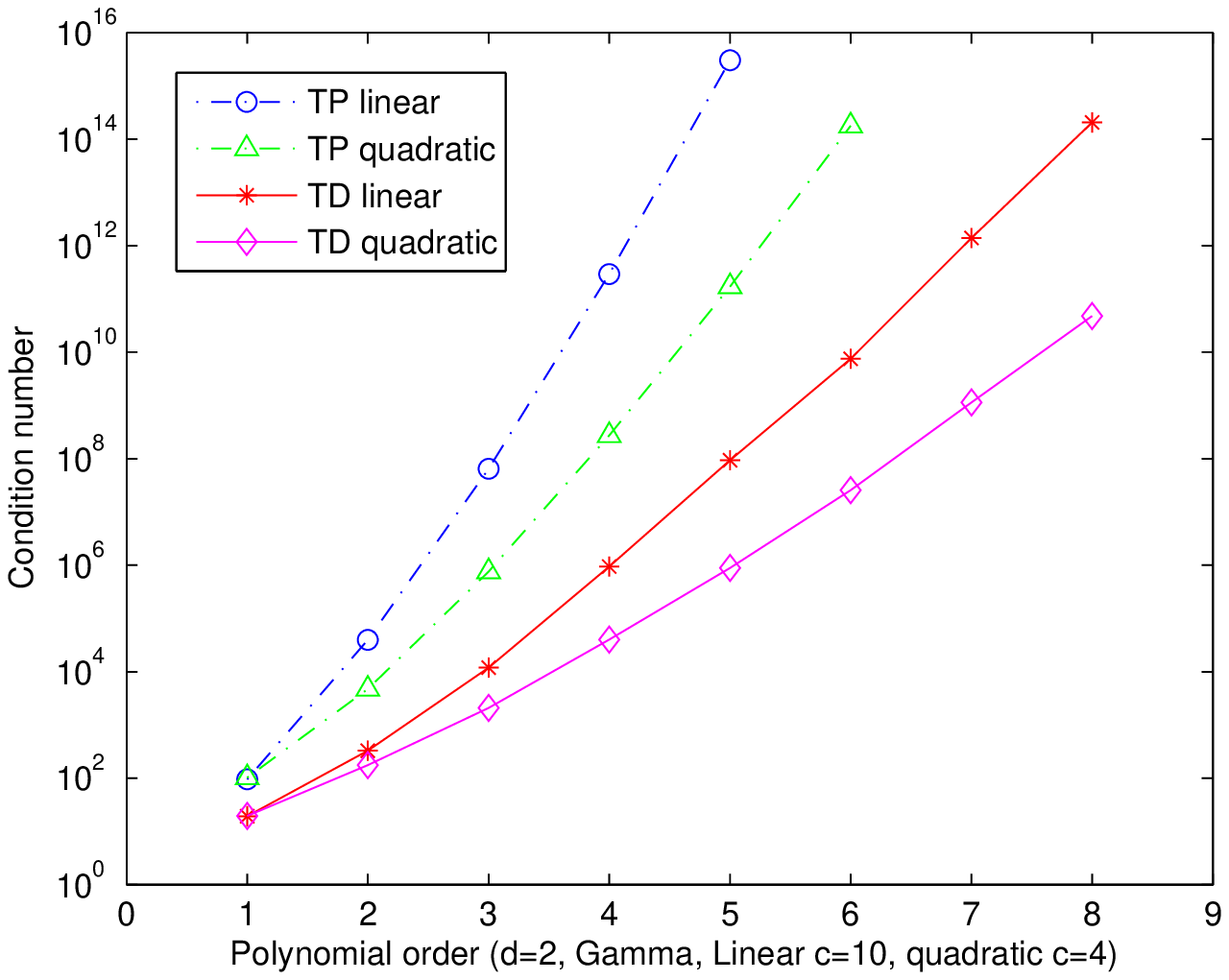}
\end{center}
\caption{Condition numbers with respect to polynomial order in the 2 dimensional case. Left: Gaussian. Right: Gamma.}\label{fig:gaussian_gamma_2_condition}
\end{figure}

We further consider the Laguerre Chaos expansion, which is
suitable for approximating functions supported in
$\mathbb{R}_+^d.$ Note that the corresponding random points
are generated by the Gamma measure.  The bottom of
Fig. \ref{fig1_condition} shows the results for one-dimensional
tests, which indicate that the condition number of the
Gamma case grows faster than that in Gaussian.

Fig. \ref{fig:gaussian_gamma_2_condition} presents
the two-dimensional tests for both the TP and TD constructions.
The left figure is for the Gaussian,
while the right one is for the Gamma.  Again
the exponential growth of the condition number
is observed again, where it is seen that the TD spaces
work better than the TP spaces.

With the above observations, it seems hopeless to control the
condition number in the unbounded domain. In fact,
to have a good control of the condition number,
it is observed in the thesis of G. Migliorati \cite{Thesis} an unfeasible number of points with
$m= (\# \Lambda)^{c (\# \Lambda)}$ is needed.
To improve this, we shall introduce the Hermite (Laguerre)
function approach to replace the Hermite (Laguerre) polynomial
expansion.

\begin{remark}
{\rm
We remark that we are not saying that the Hermite (Laguerre)
polynomial chaos expansions are unfeasible in the least
square framework. In fact, we can still use such approaches
with {\em small} number of  polynomial degrees.
In this case, fast convergence can still be expected.
However, the convergence rate deteriorates when a large
number of polynomial degree $q$ is used due to the exponential
growth of the condition number. Some
numerical tests are provided in \cite{Thesis}.
}
\end{remark}

\section{The Hermite (Laguerre) function expansions}
\setcounter{equation}{0}

In this section, we propose to use the Hermite (Laguerre) function approximation instead of the traditional Hermite (Laguerre) polynomial approximation. The one-dimensional Hermite functions, also named modified Hermite polynomials, are defined by
\begin{equation}
\label{hf}
\tilde{H}_m(y)=\textmd{e}^{-\frac{y^2}{2}} H_m(y), \quad m=0,1,...
\end{equation}
where $\{H_m(y)\}_{m\geq 0}$ are normalized Hermite polynomials. Note that the Hermite functions are orthogonal in the following sense
\begin{equation}
\int_{-\infty}^{+\infty}  \tilde{H}_m(y)\tilde{H}_n(y) dy = \delta_{mn}.
\end{equation}
The corresponding multivariate Hermite functions $\mathbf{\tilde{H}}_\mathbf{m}(\mathbf{y})$ can be defined by tensorizing the one dimensional Hermite functions.

The Laguerre functions are defined as
\begin{equation}
\tilde{L}_m(y)=\textmd{e}^{-\frac{y}{2}} L_m(y), \quad m=0,1,...,
\end{equation}
where $\{L_m(y)\}_{m\geq 0}$ are Laguerre polynomials.  The corresponding multi-variate Laguerre functions $\mathbf{\tilde{L}}_\mathbf{m}(\mathbf{y})$ can be defined in a similar way. Note that the Hermite/Laguerre functions are no longer polynomials. Nevertheless,
in what follows, whenever we use \textit{polynomial order} $q$
it is referring to the \textit{q}th Hermite/Laguerre function.

It is clear that the Hermite (Laguerre) function expansions are suitable for approximating functions decaying  to zero when $y$ goes to infinity. We claim that in UQ applications, we can almost always consider approximating decay functions. To see this, let $f(y)$ (scalar case, for simplicity) be a function with Gaussian parameters, that might be the solution of certain stochastic ODEs/PDEs. In the UQ applications, one is interested in some statistic quantities of $f(y),$ such as the $k$th moment $\int_\Gamma \rho(y) f^k(y) dy.$ Let us consider a general expression of such QoI:
\begin{equation}\label{eq:QoI}
\textmd{QoI} = \int_\Gamma \rho(y) (g\!\circ\!f) (y) dy.
\end{equation}
where $g\!\circ\!f$ is a general smooth functional of $f(y).$  Even if $g\!\circ\!f$ is not a decay function, $\rho(y) (g\!\circ\!f)$ does, provided that $g\!\circ\! f$ grows slower than Gaussian. Thus, we can in fact consider the approximation for $\tilde{f}(y)= \rho(y) (g\!\circ \!f).$  As long as a good approximation of $\tilde{f}(y)$ is found, we can get a good approximation for the QoI in (\ref{eq:QoI}).

Without loss of generality, we can assume that $f(y)$ decays exponentially. Consider the expansion
\begin{equation}\label{eq:herlagfun}
f^G(y)=\sum_{n=0}^{K-1} c_n \tilde{H}_n(y), \quad
f^E(y)=\sum_{n=0}^{K-1} c_n \tilde{L}_n(y).
\end{equation}
We are now at the stage to find good collocation points in the least square framework. As we have discussed before, the most natural way to find such points is to generate the samples with respect to the PDF of the random parameters. Moreover, if such a PDF coincides with the weight function of the bases, the expectation of the design matrix would be the identical matrix, and this feature would help for the rigorous stability analysis \cite{Cohen}. In our setting, however, the Hermite (Laguerre) functions are orthogonal with respect to the Lebesgue measure. It is known that it is {\em impossible} to generate
random points with respect to the Lebesgue measure
(uniform measure) in unbounded domains.
To overcome this difficulty, we shall introduce the mapped
uniform samples, which transform the uniform random points $\{\xi_i\}_{i=1}^m$ in $[-1,1]^d$ (or $[0,1]^d$) to $\{y_i\}_{i=1}^m$ in $[-\infty, +\infty]^d$ (or $[0, +\infty]^d$).

Although there exist many feasible mappings, we shall restrict ourselves to a family of mappings defined by
\begin{equation}\label{eq:mapping}
y'(\xi) =\frac{L}{(1-\xi^2)^{1+r/2}}, \quad r\geq 0,
\end{equation}
where $L > 0$ is a constant, and $r$ determines how fast the mapping $y(\xi)$ goes to
infinity as $\xi$ goes to $\pm 1$, see, e.g.,
\cite{Boyd,SY} for a thorough discussion on
the pros and cons of different mappings. It is easy to verify that
\begin{equation} \label{star1}
y(\xi)=\left\{ \begin{array}{ll}
\frac{L}{2}\log\frac{1+\xi}{1-\xi} &\,\,   r=0,\\
\frac{L\xi}{\sqrt{1-\xi^2}}        &\,\,  r=1,
\end{array} \right. \quad
\xi(y)=\left\{ \begin{array}{ll}
\tanh\left(\frac{y}{L}\right) &\,\,   r=0,\\
\frac{y/L}{\sqrt{y^2/L^2+1}}        &\,\,  r=1.
\end{array} \right.
\end{equation}
For other positive integers $r,$ we can always use an algebraic computing software to derive
the explicit expression of the mapping $y(\xi).$ The mapping with $r=0$ is often referred to the logarithmic mapping
which makes the transformed points decay exponentially, and the mapping with $r > 0$ is referred as algebraic mapping.
In our setting, the mapping with $r=0$ will be used when
the Gaussian measure is considered,
while the mapping with $r=1$ will be adopted when the
Gamma measure is used.

We now summarize our least square approach by taking a
one-dimensional function with Gaussian parameters
as an example. Given the function $f(y)$ to be approximated,
i.e., we are interested in the QoI of
$\int_{\mathbb{R}} \exp(-{y^2}/{2})f(y)dy$.
\begin{itemize}
\item
{\em Step 1.}
 Motivated by the discussion in the beginning of this section,
we seek the following Hermite {\em function} expansion for
$\tilde{f}(y)=\exp(-{y^2}/{2})f(y)$:
\begin{equation}
\tilde{f}(y)=\sum_{k=0}^{K-1} c_i \tilde{H}_k(y).
\end{equation}
\item
{\em Step 2.}
Let $\mathbf{P}^K:=\textmd{span}\{\tilde{H}^0,...,\tilde{H}^{k-1}\}$.
We will find the following least square solution
\begin{align}\label{eq:least_herfun}
f^K= P^K_m f = \argmin_{p\in \mathbf{P}^K}  \sum_{k=1}^m\left(p(y_k)-\tilde{f}(y_k)\right)^2,
\end{align}
where the collocation points $\{y_k\}_{k=1}^m$ are
chosen as the transformed uniform random points given by
the mapping (\ref{eq:mapping}) with $r=0$.
\end{itemize}
This procedure will lead to the desired QoI.

\subsection{Stability}

In this section, we shall investigate the stability of the least square approach by using the Hermite (Laguerre) functions, with mapped uniform distributed random points. Again, we test the condition number of the corresponding design matrices:
\begin{equation}
\textmd{cond}(\mathbf{A})=\frac{\sigma_{max}(\mathbf{A})}{\sigma_{min}(\mathbf{A})},  \quad \mathbf{A}=\mathbf{A}^G \,\,\textmd{or}\,\,\mathbf{A}^E.
\end{equation}
Here we still use $\mathbf{A}$ to avoid too many symbols
although we should point out  that $\mathbf{A}^G$ ($\mathbf{A}^E$)
are evaluations of the Hermite (Laguerre) functions on the
mapped random points in $\mathbb{R}^d$ and $\mathbb{R}_+^d$,
respectively. As such matrices are random,
their condition numbers will be obtained by repeating the
test 100 times so that the resulting mean condition number can be
obtained. The mean condition number will be used to represent
the condition number of the random matrices, which will be reported
in the following figures.

\begin{figure}[t]
\begin{center}
\includegraphics[width=0.43\textwidth]{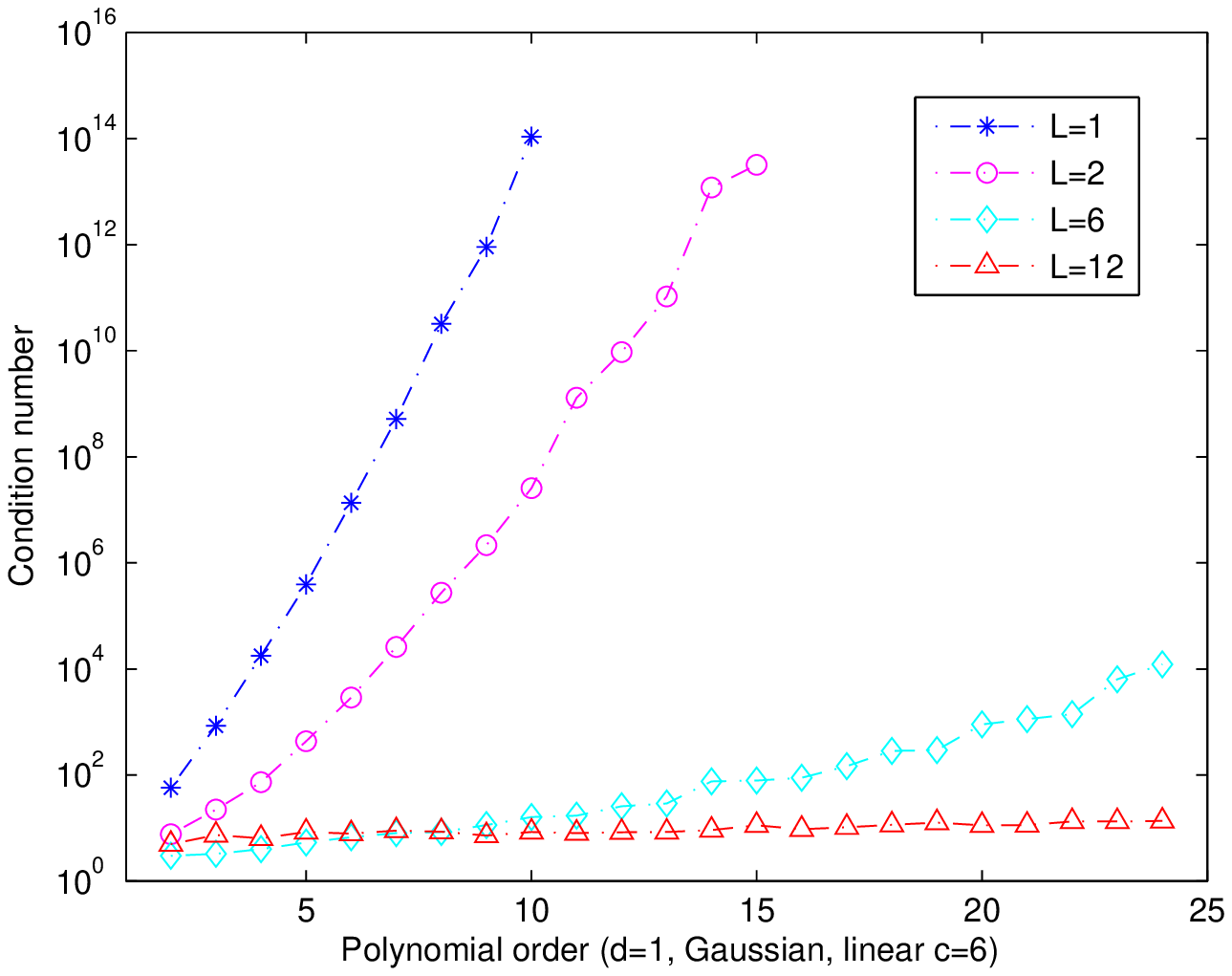}
\includegraphics[width=0.43\textwidth]{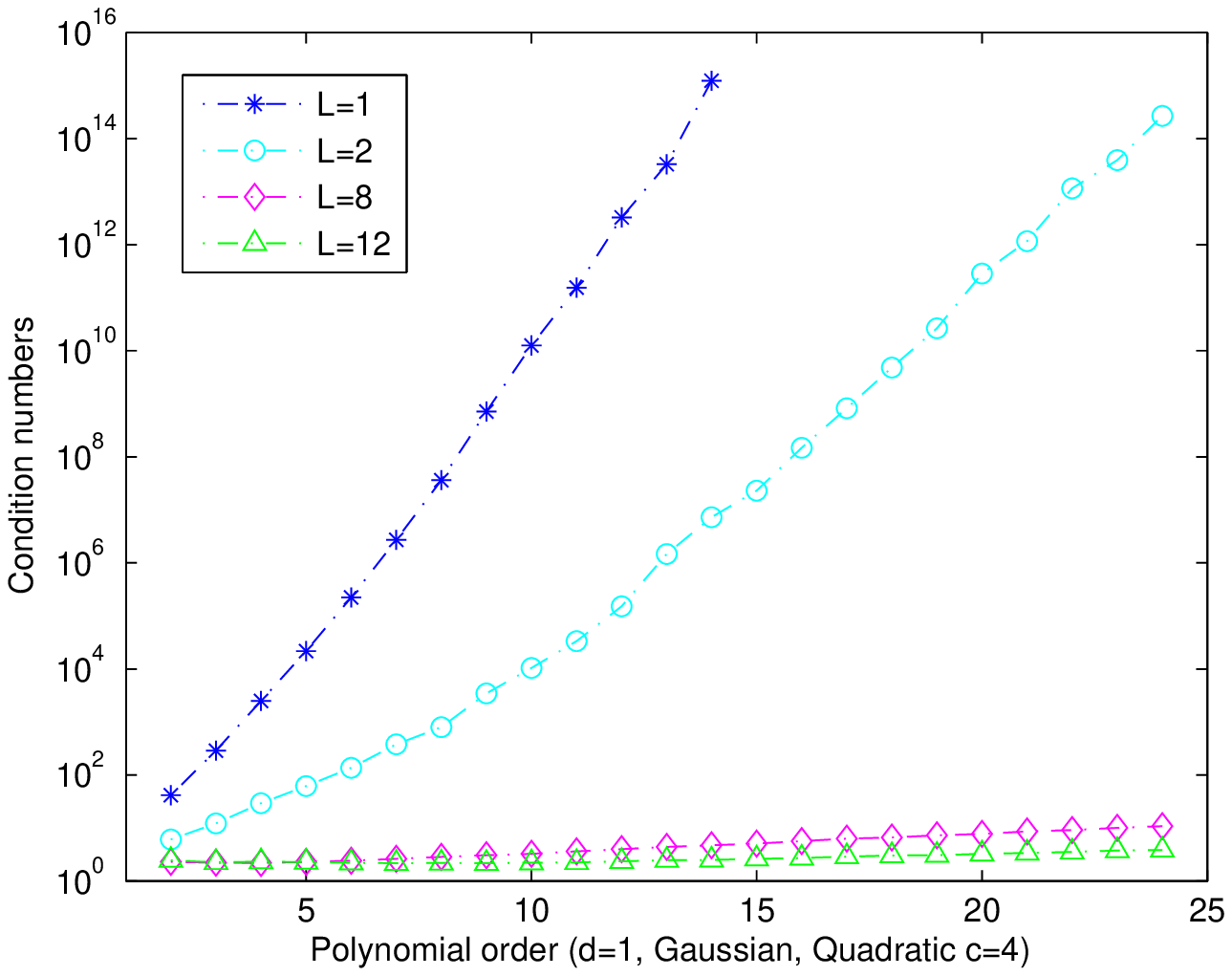}
\end{center}
\begin{center}
\includegraphics[width=0.43\textwidth]{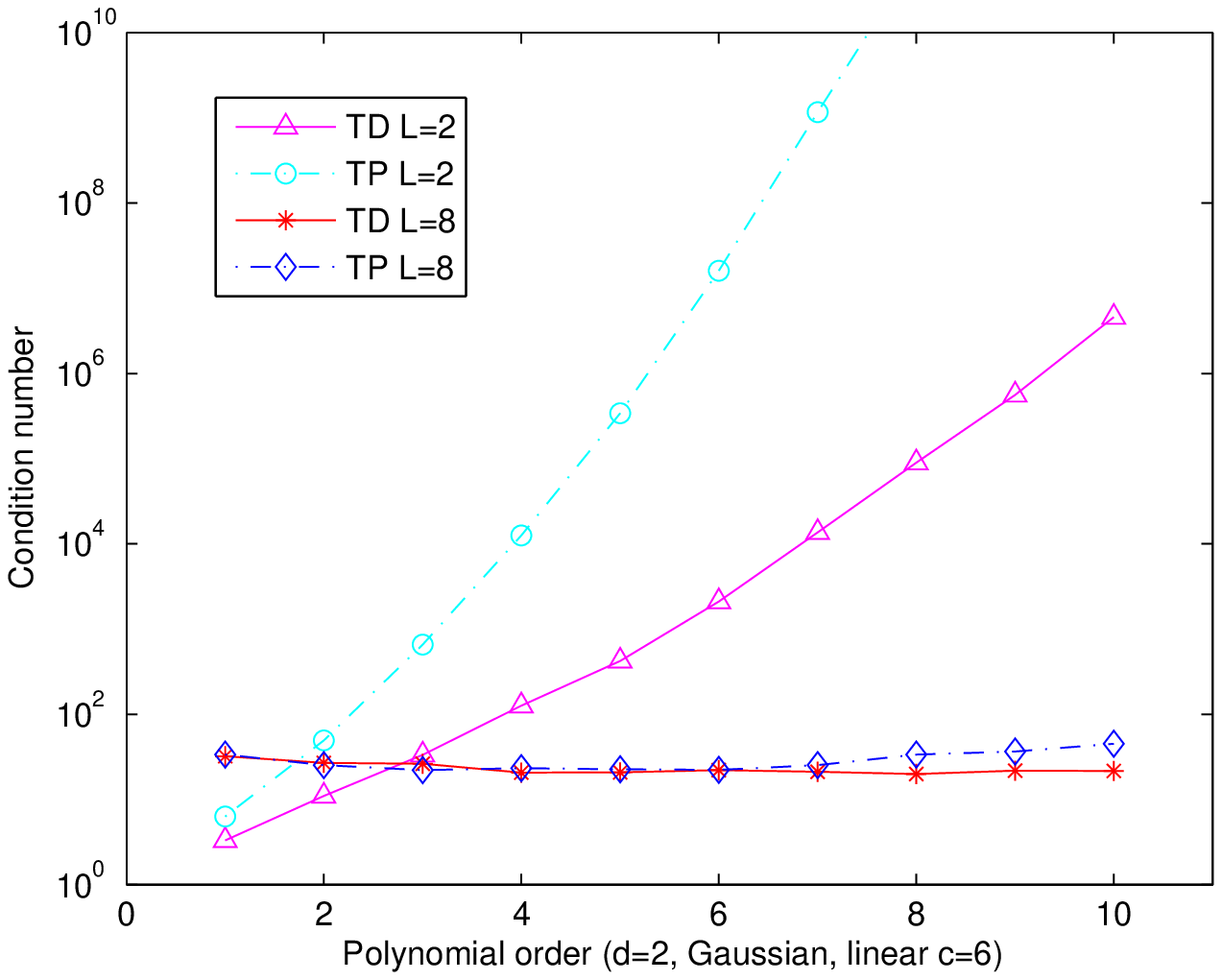}
\includegraphics[width=0.43\textwidth]{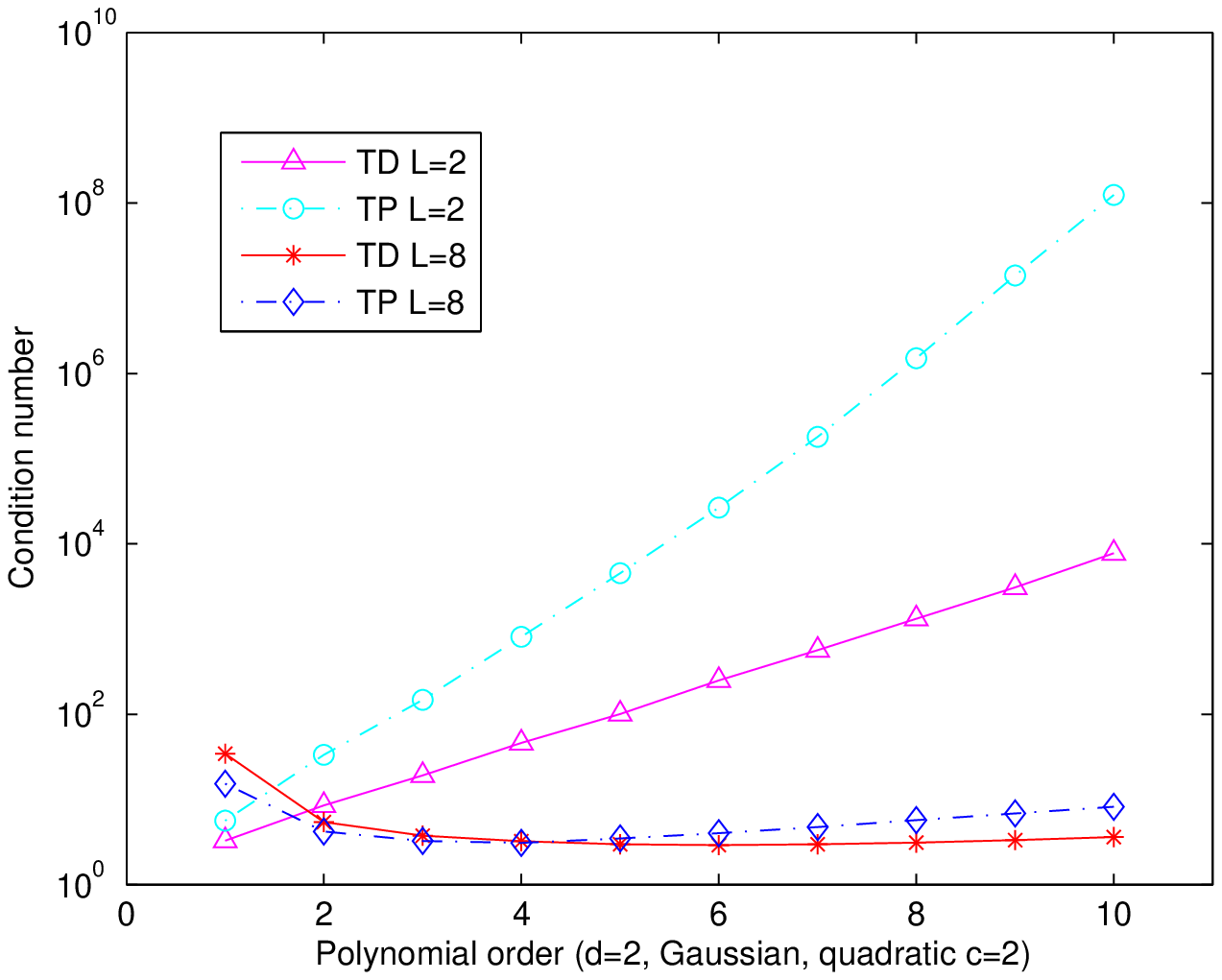}
\end{center}
\caption{\small
Condition numbers with respect to polynomial order.
Left is for $m=6* (\# \Lambda) $
and the right is for $m=4* (\# \Lambda)^2 $.
Top: 1D Gaussian, Bottom: 2D Gaussian.}
\label{fig:herfun_2d}
\end{figure}

\begin{figure}[h]
\begin{center}
\includegraphics[width=0.45\textwidth]{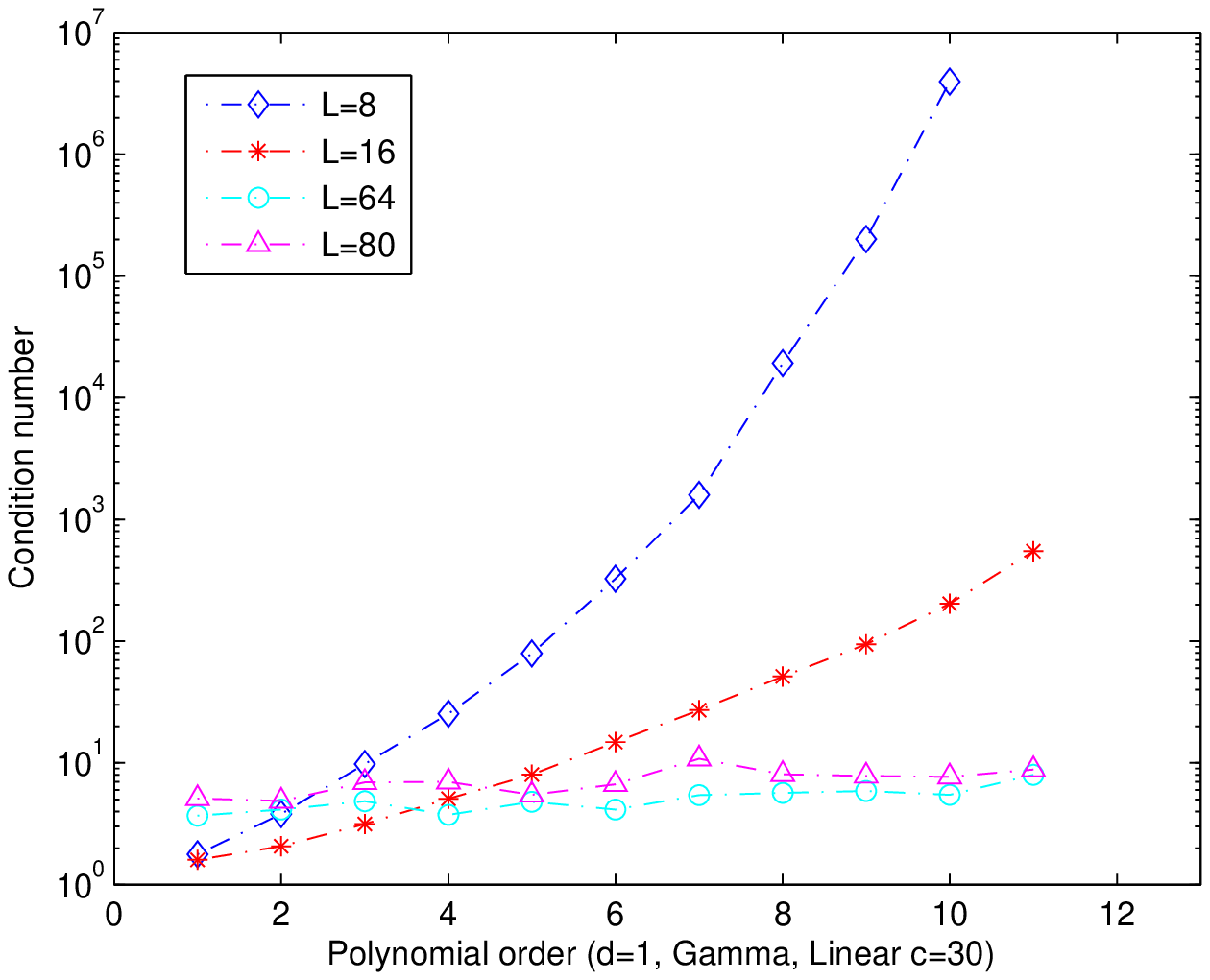}
\includegraphics[width=0.45\textwidth]{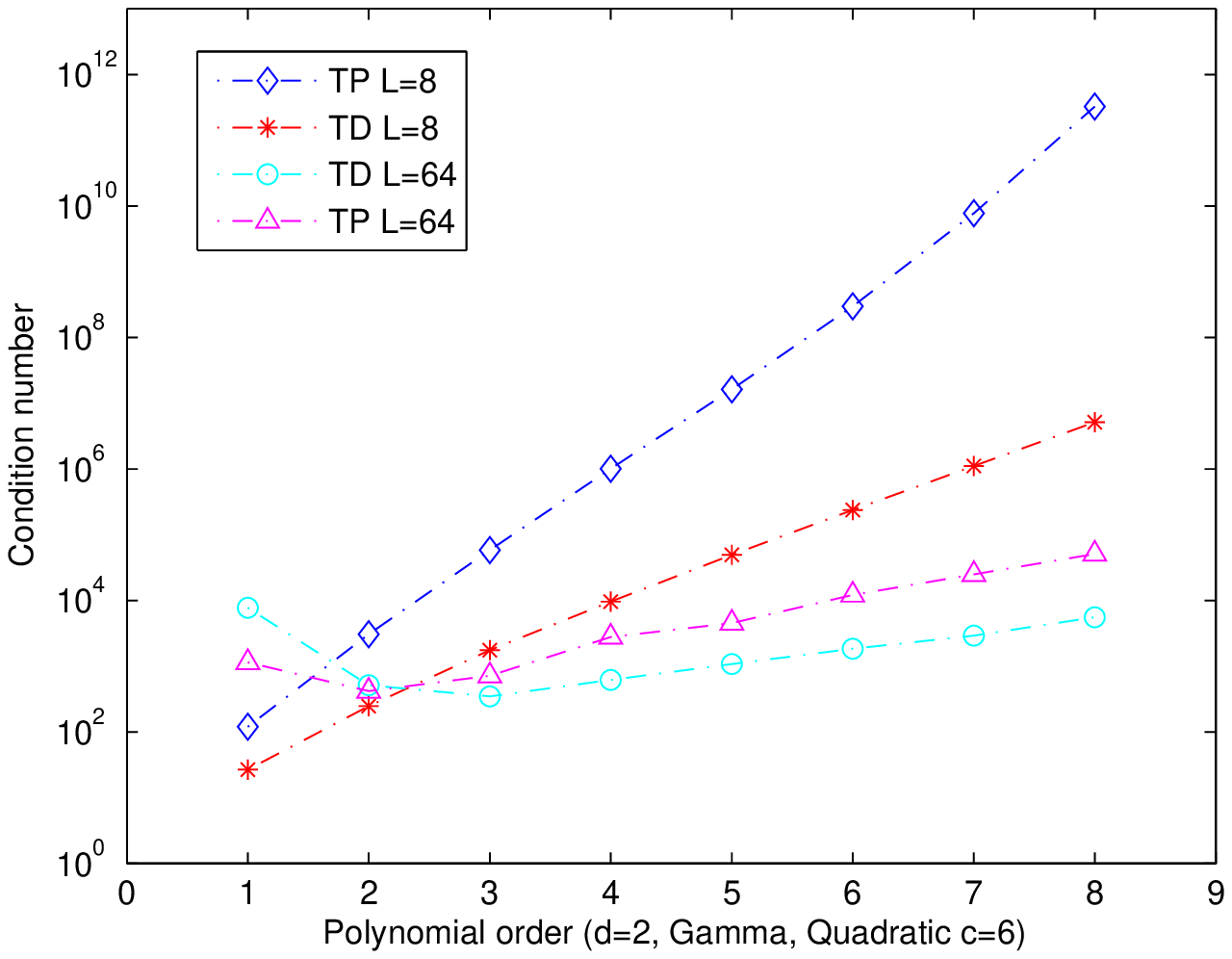}
\end{center}
\caption{\small
Condition numbers with respect to polynomial order.
Left: 1D Gamma, $m=30*(\# \Lambda) $.  Right:  2D Gamma,
$m=6*(\# \Lambda)^2 $.}
\label{fig:lagfun_condition}
\end{figure}

In Fig. \ref{fig:herfun_2d}, the condition numbers with respect to
the bases of order $q$ are given for one-dimensional Hermite function bases. The left plot is devoted to the linear rule with $m=6* (\# \Lambda) $, while the right plot is for the quadratic rule with $m=4*(\# \Lambda)^2$. In both cases, we can see that using a relatively large transform parameter $L,$ the random matrices $\mathbf{A}$ are well conditioned.  The two-dimensional cases are reported in the bottom of Fig.
\ref{fig:herfun_2d} for both the TP space and the TD space. Again, the parameter $L=8$ results in well conditioned design matrix, for both the TP and TD spaces. However, under the same parameter (say $L=2$), the design matrix of the TD spaces are much better conditioned than that for the TP spaces,
which is one of the reasons that
the TD space is  preferred for higher dimensional approximation.

Similar numerical tests are carried out for the Laguerre bases
and in this case the mapping (\ref{star1})  with $r=1$ is used.
The 1D result in the left of Fig. \ref{fig:lagfun_condition}
suggests that the parameter $L=8$ can no longer guarantee
the stability, while a larger parameter (say $L=64$)
will work. The two-dimensional
plot is  given in the right of the figure.
Again, the parameter $L=64$ results in a better condition number
for the design matrices.
We also note that more points and larger parameters $L$
are needed for higher dimensional cases.  Moreover,
the TD space ( $\circ$ and $\ast$ plots) provide
better stability than that of the TP
( $\triangleleft$ and $\diamond $ plots) space.

We conclude that the design matrix $\mathbf{A}$ can be
well-conditioned under a set of transformed random points
with some relatively large parameter $L$.
As the decay rate for Gaussian is faster than that
for Laguerre, the transformation parameter  $L$ for the Gaussian must be smaller than that for the Leguerre function

In the following, a rigorous analysis for the
stability will be provided.
We will only provide the proof for the one-dimensional Hermite
functions case; the proof can be extended
to the Laguerre case in a straightforward manner.

We first give a lemma concerning the decay properties of the  Hermite functions.

\begin{lemma}
\label{lemma31}
For any integer $K,$ we can find a constant $\tau>0$ such that
\begin{equation}
|\tilde{H}_k(y)| \leq |y|^{-\frac{3}{2}}, \quad \forall \, 0\leq k \leq K-1,
\end{equation}
provided that $|y|>\tau.$
\begin{proof}
Such a simple result is true because for any $t>0$ we have
\begin{equation}
|\tilde{H}_k(y)| \cdot |y|^t \rightarrow 0 \quad \textmd{when} \quad |y| \rightarrow \infty,
\end{equation}
due to the involvement of the
factor $e^{-\frac{y^2}{2}}$ in the Hermite functions.
\end{proof}
\end{lemma}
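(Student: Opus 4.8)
The plan is to exploit the defining structure of the Hermite functions: by \eqref{hf}, each $\tilde H_k(y)=e^{-y^2/2}H_k(y)$ is the product of a \emph{polynomial} $H_k$ of degree exactly $k$ and the Gaussian factor $e^{-y^2/2}$. Since a Gaussian decays faster than any power of $|y|$ grows, multiplying $\tilde H_k$ by $|y|^{3/2}$ still yields a function that vanishes at infinity; the only thing to arrange is that the threshold beyond which it drops below $1$ can be chosen uniformly over the finitely many indices $k=0,\dots,K-1$.

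First I would fix an index $k$ with $0\le k\le K-1$ and consider the scalar function
\[
g_k(y):=|y|^{3/2}\,|\tilde H_k(y)|=|y|^{3/2}\,|H_k(y)|\,e^{-y^2/2}.
\]
Because $|y|^{3/2}|H_k(y)|$ grows at most like $|y|^{\,k+3/2}$ while $e^{-y^2/2}$ decays super-algebraically, we have $g_k(y)\to 0$ as $|y|\to\infty$. Hence there is some $\tau_k>0$ with $g_k(y)\le 1$, i.e. $|\tilde H_k(y)|\le |y|^{-3/2}$, for every $|y|>\tau_k$. If one prefers an explicit $\tau_k$, it suffices to invoke any standard polynomial bound on Hermite polynomials, e.g. $|H_k(y)|\le c_k(1+|y|)^k$, and solve $c_k(1+|y|)^{\,k+3/2}e^{-y^2/2}\le 1$; this is not needed for the statement.

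Next I would set $\tau:=\max_{0\le k\le K-1}\tau_k$. This maximum is over a finite set, hence finite and strictly positive, and for any $|y|>\tau$ we have $|y|>\tau_k$ for every admissible $k$, so $|\tilde H_k(y)|\le|y|^{-3/2}$ holds simultaneously for all $0\le k\le K-1$. This is precisely the asserted bound.

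I do not expect a genuine obstacle here: the result is elementary and follows purely from the exponential factor present in the Hermite functions together with the finiteness of the index range. The only point requiring any (trivial) care is the uniformity of $\tau$ across $k$, which is handled by taking a maximum over finitely many indices; no attempt is made to track how $\tau$ depends on $K$, since the statement asserts only existence.
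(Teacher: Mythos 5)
Your argument is correct and is essentially the paper's own proof: both rest on the observation that the factor $e^{-y^2/2}$ forces $|\tilde H_k(y)|\,|y|^t\to 0$ as $|y|\to\infty$, so a suitable threshold exists. Your extra step of taking $\tau=\max_k \tau_k$ over the finitely many indices merely makes explicit the uniformity the paper leaves implicit.
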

We are now ready to prove the stability. Such analysis requires an understanding of how the scaled random matrix $\mathbf{\hat{A}}=L\mathbf{A}$ deviates from its expectation $\mathbb{E}[\mathbf{\hat{A}}]$ in
probability $\mathbf{Pr}\{\cdot\}$. Note that the matrix $\mathbf{\hat{A}}$ can be written as
\begin{align*}
\mathbf{\hat{A}}= \mathbf{X}_1+ \mathbf{X}_2+ \cdot\cdot\cdot+ \mathbf{X}_m,
\end{align*}
where the $\mathbf{X}_i$ are i.i.d. copies of the random matrix
\begin{equation}
\label{satr4}
\mathbf{X}= \frac{L}{m} \left( \tilde{H}_i(y) \tilde{H}_j(y)\right)_{i,j=0,...,K-1},
\end{equation}
where $y$ is a transformed uniform random variable. We now state the stability result

\begin{theorem}
The least square approach using the Hermite functions (\ref{hf})
and the transformed uniform random points (\ref{star1})
is stable in the sense that the scaled design matrix satisfies
that $\forall\, r>0$
\begin{equation}\label{eq:stability}
\mathbf{Pr}\left\{||| \mathbf{\hat{A}}-\mathbf{I} |||\geq
\frac{5}{8} \right\} \leq 2m^{-r},
\end{equation}
provided that
\begin{equation}
\label{star3}
K \leq \kappa \frac{m}{\log m} \quad  \textmd{with} \quad
\kappa: = \frac{4 c_{1/2}}{3(1+r)},
\quad c_{\frac{1}{2}}=\frac{1}{2} +\frac{1}{2}\log\frac{1}{2} >0,
\end{equation}
and the mapping parameter $L$ in (\ref{star1})
satisfies
\begin{equation} \label{star2}
L > \max\{3\tau, 5\sqrt{K}\},
\end{equation}
where $m$ is the number of the random points, $K$ is the degree of the polynomial, and $M$ is the constant
given in Lemma \ref{lemma31}.
\end{theorem}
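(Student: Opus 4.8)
The plan is to treat the scaled design matrix as a sum of i.i.d.\ positive semidefinite random matrices, $\hat{\mathbf A}=\sum_{i=1}^m\mathbf X_i$ with the $\mathbf X_i$ independent copies of $\mathbf X=\frac{L}{m}\Phi(y)\Phi(y)^\top\succeq 0$, $\Phi(y):=(\tilde H_0(y),\dots,\tilde H_{K-1}(y))^\top$ (recall (\ref{satr4})), and to apply a matrix Chernoff inequality of Tropp's type — the same mechanism behind the bounded-domain analysis in \cite{Cohen}. Write $\hat{\mathbf A}-\mathbf I=\big(\hat{\mathbf A}-\mathbb E[\hat{\mathbf A}]\big)+\big(\mathbb E[\hat{\mathbf A}]-\mathbf I\big)$. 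The deterministic \emph{bias} term will be made $\le\tfrac18$ in operator norm by choosing $L$ large, which is the role of hypothesis (\ref{star2}); the random \emph{fluctuation} term will be made $\le\tfrac12$ with probability at least $1-2m^{-r}$ by matrix concentration, which is the role of hypothesis (\ref{star3}). Adding the two gives the slack $\tfrac58=\tfrac12+\tfrac18$ appearing in (\ref{eq:stability}).

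For the bias I would compute $\mathbb E[\hat{\mathbf A}]=L\int_{\mathbb R}\Phi(y)\Phi(y)^\top\,d\mu_L(y)$, where $\mu_L$ is the push-forward of the uniform law on $[-1,1]$ under the mapping (\ref{star1}) with $r=0$, hence has density proportional to $\mathrm{sech}^2(y/L)$. Since $\{\tilde H_k\}$ is orthonormal for Lebesgue measure and $\mathrm{sech}^2(y/L)$ is essentially constant on any fixed bounded set once $L$ is large, $\mathbb E[\hat{\mathbf A}]$ converges (after the appropriate normalization of $\mu_L$) to $\mathbf I$. To quantify this I would split the integral at $|y|=\tau$: on $|y|\le\tau$ the weight differs from a constant by $\mathcal O(\tanh^2(\tau/L))=\mathcal O((\tau/L)^2)$, small by $L>3\tau$, and the truncated Gram matrix obeys $\big(\int_{|y|\le\tau}\tilde H_i\tilde H_j\,dy\big)_{i,j}\preceq\mathbf I$; on $|y|>\tau$ Lemma~\ref{lemma31} gives $|\tilde H_i(y)\tilde H_j(y)|\le|y|^{-3}$, so the tail contributes a matrix with entries $\mathcal O(\tau^{-2})$ and hence operator norm $\mathcal O(K\tau^{-2})$, small once $\tau$ is taken to be a suitable multiple of $\sqrt K$. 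The floor $L>5\sqrt K$ enters because the functions $\tilde H_k$, $k<K$, are essentially supported in $|y|\lesssim\sqrt{2K}$, which is the scale on which $\mu_L$ must look flat.

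The heart of the proof is the matrix Chernoff step. With a uniform almost-sure bound $|||\mathbf X|||\le R$ and $\mu_{\min}=\lambda_{\min}(\mathbb E[\hat{\mathbf A}])$, Tropp's inequality gives
\[
\mathbf{Pr}\!\left\{\lambda_{\min}(\hat{\mathbf A})\le\tfrac12\mu_{\min}\right\}\ \le\ K\Big[\tfrac{e^{-1/2}}{(1/2)^{1/2}}\Big]^{\mu_{\min}/R}\ =\ K\,e^{-c_{1/2}\mu_{\min}/R},\qquad c_{1/2}=\tfrac12+\tfrac12\log\tfrac12,
\]
together with the matching upper-tail estimate for $\lambda_{\max}(\hat{\mathbf A})$; the union bound produces the factor $2$ in (\ref{eq:stability}). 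The ingredient still needed is a uniform bound on $|||\mathbf X|||=\tfrac{L}{m}\|\Phi(y)\|^2=\tfrac{L}{m}\sum_{k=0}^{K-1}\tilde H_k(y)^2$. For $|y|>\tau$, Lemma~\ref{lemma31} gives $\sum_{k<K}\tilde H_k(y)^2\le K|y|^{-3}$, negligible in the relevant range; for $|y|\le\tau$ I would invoke the uniform diagonal estimate for the Christoffel--Darboux kernel, $\sup_y\sum_{k=0}^{K-1}\tilde H_k(y)^2=\mathcal O(\sqrt K)$, and use $L=\mathcal O(\sqrt K)$ to conclude $R\le\tfrac{3K}{4m}$ (the precise constant $\tfrac34$ here being what turns into the factor $\tfrac43$ in $\kappa$). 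Substituting, $K\,e^{-c_{1/2}\mu_{\min}/R}\le K\exp\!\big(-\tfrac{4c_{1/2}\mu_{\min}}{3}\cdot\tfrac{m}{K}\big)$; using $\mu_{\min}\approx 1$ from the bias step and $K\le\kappa\,m/\log m$ (so that $\log K$ is absorbed into $\log m$), the choice $\kappa=\tfrac{4c_{1/2}}{3(1+r)}$ is exactly what forces the right-hand side below $m^{-r}$, and the same for the $\lambda_{\max}$ tail yields (\ref{eq:stability}).

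The step I expect to be the main obstacle is the pair of uniform estimates for $\sum_{k=0}^{K-1}\tilde H_k(y)^2$: obtaining the sharp $\mathcal O(\sqrt K)$ growth in the bulk — summing individual bounds $\|\tilde H_k\|_\infty\lesssim k^{-1/12}$ would only give $\mathcal O(K^{5/6})$, too lossy to keep $m$ linear in $K$ — and splicing it cleanly with Lemma~\ref{lemma31} so that the absolute constant comes out as $\tfrac34$. A second delicate point is the coupling of $L$ with the two estimates: the bias wants $L$ as large as possible while $R$ carries an explicit factor $L$, so hypothesis (\ref{star2}) has to be read as pinning $L$ to order $\sqrt K$, and the normalization of $\mu_L$ must be tracked carefully enough that $\mathbb E[\hat{\mathbf A}]$ really lands within $\tfrac18$ of $\mathbf I$.
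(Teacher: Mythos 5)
Your proposal follows the same architecture as the paper's proof: write $\hat{\mathbf A}=\sum_i\mathbf X_i$ with i.i.d.\ rank-one summands as in (\ref{satr4}), apply the Ahlswede--Winter/Tropp matrix Chernoff bounds, control the expectation through the mapped-uniform weight $1-\tanh^2(y/L)$ together with the decay Lemma \ref{lemma31} (large $L$ makes the weight essentially flat on the effective support of the $\tilde H_k$, whose scale $\sim\sqrt{2K}$ explains $L>5\sqrt K$), and then convert the exponential tail into $2m^{-r}$ via $K\le\kappa m/\log m$. The differences are in two estimates. First, the bookkeeping of $5/8$: the paper does not split into bias $\le\frac18$ plus fluctuation $\le\frac12$; it proves $\mu_{\min}\ge\frac34$, $\mu_{\max}\le1$ by a Rayleigh-quotient argument (splitting the integral at $|y|=L/3$, not at $\tau$) and takes $\delta=\frac12$, so that $5/8=1-\frac12\cdot\frac34$; your variant needs the stronger bias bound $\mu_{\min}\ge\frac78$, which the paper's own constants do not quite deliver (they give roughly $0.84$), so you would have to push the split point or the size of $L$ a bit further. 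Second, and more substantively, the step you flag as the main obstacle -- the sharp Christoffel--Darboux diagonal bound $\sup_y\sum_{k<K}\tilde H_k^2(y)=\mathcal O(\sqrt K)$ -- is not used in the paper at all: the paper bounds $|||\mathbf X_i|||\le M(K)/m$ and then simply takes $M(K)\le K$ from $\tilde H_k^2<1$, which already yields the stated condition. The reason you are driven to the sharp kernel bound is that you (correctly, per the definition (\ref{satr4})) keep the factor $L$ inside $R$, and with $L\asymp\sqrt K$ the crude $M(K)\le K$ would only give $m\gtrsim K^{3/2}\log m$; the paper's displayed bound for $|||\mathbf X_i|||$ silently drops this factor $L$ (and likewise the uniform density $1/2$ in $\mathbb E[\hat{\mathbf A}]$), so your route is the more careful one on this point, at the price of importing Plancherel--Rotach-type asymptotics that the paper's elementary argument avoids. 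Finally, the constant $\kappa$ in the paper comes from $\mu_{\min}=\frac34$ entering the exponent $c_{1/2}\mu_{\min}/R$, not from the constant in your bound $R\le\frac{3K}{4m}$ as you guessed (and there the paper appears to invert $3/4$ into $4/3$); these constant-level discrepancies do not affect the substance of either argument.
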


\begin{proof}
The analysis follows closely \cite{Cohen} and will use the following Chernoff bound \cite{bounds_I,bounds_II}.
Let  $\mathbf{X}_1,...,\mathbf{X}_n$ be independent $K\times K$ random self-adjoint and positive matrices satisfying
\begin{align*}
\lambda_{max}(\mathbf{X}_i)= |||\mathbf{X}_i||| \leq R
\end{align*}
almost surely, and let
\begin{align*}
\mu_{min}:= \lambda_{min}\left(\sum_{i=1}^m \mathbb{E}\left[\mathbf{X}_i\right]\right), \quad \mu_{max}:= \lambda_{max}\left(\sum_{i=1}^m \mathbb{E}\left[\mathbf{X}_i\right]\right).
\end{align*}
Then, one has for $0< \delta< 1$
\begin{align}\label{eq:min}
\textmd{Pr}\left\{\lambda_{min}\left(\sum_{i=1}^m \mathbf{X}_i\right) < (1-\delta)\mu_{min}\right\}\leq K \left(\frac{e^{-\delta}}{(1-\delta)^{1-\delta}}\right)^{\mu_{min}/R},
\end{align}
\begin{align}\label{eq:max}
\textmd{Pr}\left\{\lambda_{max}\left(\sum_{i=1}^m \mathbf{X}_i\right) > (1+\delta)\mu_{max}\right\}\leq K \left(\frac{e^{\delta}}{(1+\delta)^{1+\delta}}\right)^{\mu_{max}/R}.
\end{align}
Note that a rank 1 symmetric matrix $ab^T = (b_ja_k)_{j,k=1,...,m}$ has its spectral norm equal to the product of the Euclidean norms of the vectors $a$ and $b,$ and therefore we have
\begin{align*}
|||\mathbf{X}_i||| \leq \frac{1}{m}\sum_{i=0}^{K-1} \tilde{H}^2_i = \frac{ M(K)}{m}  :=  R \quad
\textmd{with} \quad
M(K) = \sup_{y\in \mathbb{R}} \sum_{i=0}^{K-1} \tilde{H}^2_i(y).
\end{align*}
We are now at the stage to find $\mu_{min}$ and $\mu_{max}.$ Let
\begin{align*}
\mathbf{\bar{A}}=\mathbb{E}[\mathbf{\hat{A}}]=\sum_{i=1}^m \mathbb{E}\left[\mathbf{X}_i\right].
\end{align*}
Using the definition of the expectation and Eq. (\ref{eq:mapping}), we know that the elements of $\mathbf{\bar{A}}$ satisfy
\begin{align*}
a_{i,j}= \int_{-1}^{1} L \tilde{H}_i\big(y(\xi)\big) \tilde{H}_j\big(y(\xi)\big) d\xi  =\int_{-\infty}^{+\infty} \left(1-\textmd{tanh}^2\left(\frac{y}{L}\right)\right) \tilde{H}_i( y) \tilde{H}_j( y) dy.
\end{align*}
Let $V:=\textmd{span}\big\{\tilde{H}_0,...,\tilde{H}_{K-1}\big\},$ and
\begin{align*}
a(u,v)=\int_{-\infty}^{+\infty} \left(1-\textmd{tanh}^2\left(\frac{y}{L}\right)\right) u v dy,  \quad b(u,v)=\int_{-\infty}^{+\infty} u v dy.
\end{align*}
By the Rayleigh quotient argument \cite{Rayleigh}, we have
\begin{equation}\label{eq:relaiy}
\mu_{min} = \min_{v\in V} \frac{a(v,v)}{b(v,v)}, \quad
\mu_{max} = \max_{v\in V} \frac{a(v,v)}{b(v,v)}.
\end{equation}
It is easy to verify that
\begin{align}
\mu_{max} = \max_{v\in V} \frac{a(v,v)}{b(v,v)} \leq 1.
\end{align}
We now estimate $\mu_{min}.$
Let $v=\sum_{k=0}^{K-1} c_k \tilde{H}_k$. We have
\begin{align}\label{eq:estimatesI}
a(v,v)\geq \left(1-\textmd{tanh}^2(\frac{1}{3})\right) \int_{-\frac{L}{3}}^{\frac{L}{3}}\!\! v^2 dy
=\left(1-\textmd{tanh}^2(\frac{1}{3})\right) \left( \int_{-\infty}^{\infty}\!\! v^2 dy-2\varepsilon\right),
\end{align}
where
\begin{eqnarray}\label{eq:estimatesII}
\varepsilon &=&\int_{\frac{L}{3}}^\infty\! v^2 dy  = \int_{\frac{L}{3}}^\infty\! \left(\sum_{k=0}^{K-1} c_k \tilde{H}_k\right)^2 dy
\nn \\
&\leq& K^2 \max_i\{c^2_i\} \int_{\frac{L}{3}}^\infty  y^{-3} dy
\leq \frac{3^4 K^2}{4 L^4} \left(\sum_{k=0}^{K-1} c^2_k\right),
\end{eqnarray}
where we have used Lemma \ref{lemma31} with $L\geq 3\tau.$
If $L > \max\{3\tau, 5\sqrt{K}\}$, then
using Eqs. (\ref{eq:relaiy}), (\ref{eq:estimatesI})
and (\ref{eq:estimatesII}) gives
\begin{equation}
\mu_{min} \geq \left(1-\textmd{tanh}^2(\frac{1}{3})\right)
\left(1- \frac{3^4 K^2}{2 L^4}\right) \geq \frac{3}{4}.
\end{equation}
Putting $\mu_{min}=\frac{3}{4}$ and $\mu_{max}=1$ into
Eqs. (\ref{eq:min}) and (\ref{eq:max}) respectively
and letting $\delta=\frac{1}{2}$ yield
\begin{align}
 \textmd{Pr}\left\{ ||| \mathbf{\hat{A}}-\mathbf{I}||| > \frac{5}{8} \right\} \leq 2K\left(\frac{e^{-{\frac{1}{2}}}}{(\frac{1}{2})^{1/2}}\right)^{3/4R} = 2K\exp \left( -\frac{4 c_{1/2} m}{ 3 M(K)}\right),
\end{align}
where $c_{\frac{1}{2}}=\frac{1}{2} +\frac{1}{2}\log\frac{1}{2} >0.$
Finally letting
\begin{equation}
\label{3.1}
M(K) \leq \kappa \frac{m}{\log m} \quad  \textmd{with} \quad \kappa: = \frac{4 c_{1/2}}{3(1+r)}
\end{equation}
yields the desired result (\ref{eq:stability}).
Note that we have $\tilde{H}^2_i(y) < 1, 0\le i\le K-1.$
Consequently, we can choose $M(K)=K$ in (\ref{3.1}).
The proof is complete.
\end{proof}

Note that the requirement (\ref{star2}) for $L$ may
not be optimal, as the numerical tests  in Fig.
\ref{fig:herfun_2d} suggest
 that the mapping with parameter $L \geq 8$ results in
very stable approach (up to polynomial order of 25).
In fact, inspired by the above proof, we need to choose
a large parameter $L$ so that the integral
(\ref{eq:estimatesII}) is sufficiently small. On the other hand,
it is known that the largest root of $\tilde{H}_K$
behaves like $\sqrt{2K},$  so the requirement $L>\sqrt{5K}$
asymptotically coincides with that $L$ should be bigger than the largest
root of $\tilde{H}_K$.

We also point out that the proof above can be extended to the Laguerre
case. However, as the Laguerre functions decay much slower
than the Hermite functions, a larger parameter $L$
(approximately the square of the Hermite case) should be used.
This can also be estimated by noting that the largest root
of $\tilde{L}_K$ behaves like $cK.$  Again, these theoretical
results are in good agreement  with our numerical tests in Fig.
\ref{fig:lagfun_condition}, where $L=8^2=64$ leads to
very stable approach under the linear rule.

\subsection{Convergence and the scaling factor: motivation}

\begin{figure}[h]
\begin{center}
\includegraphics[width=0.45\textwidth]{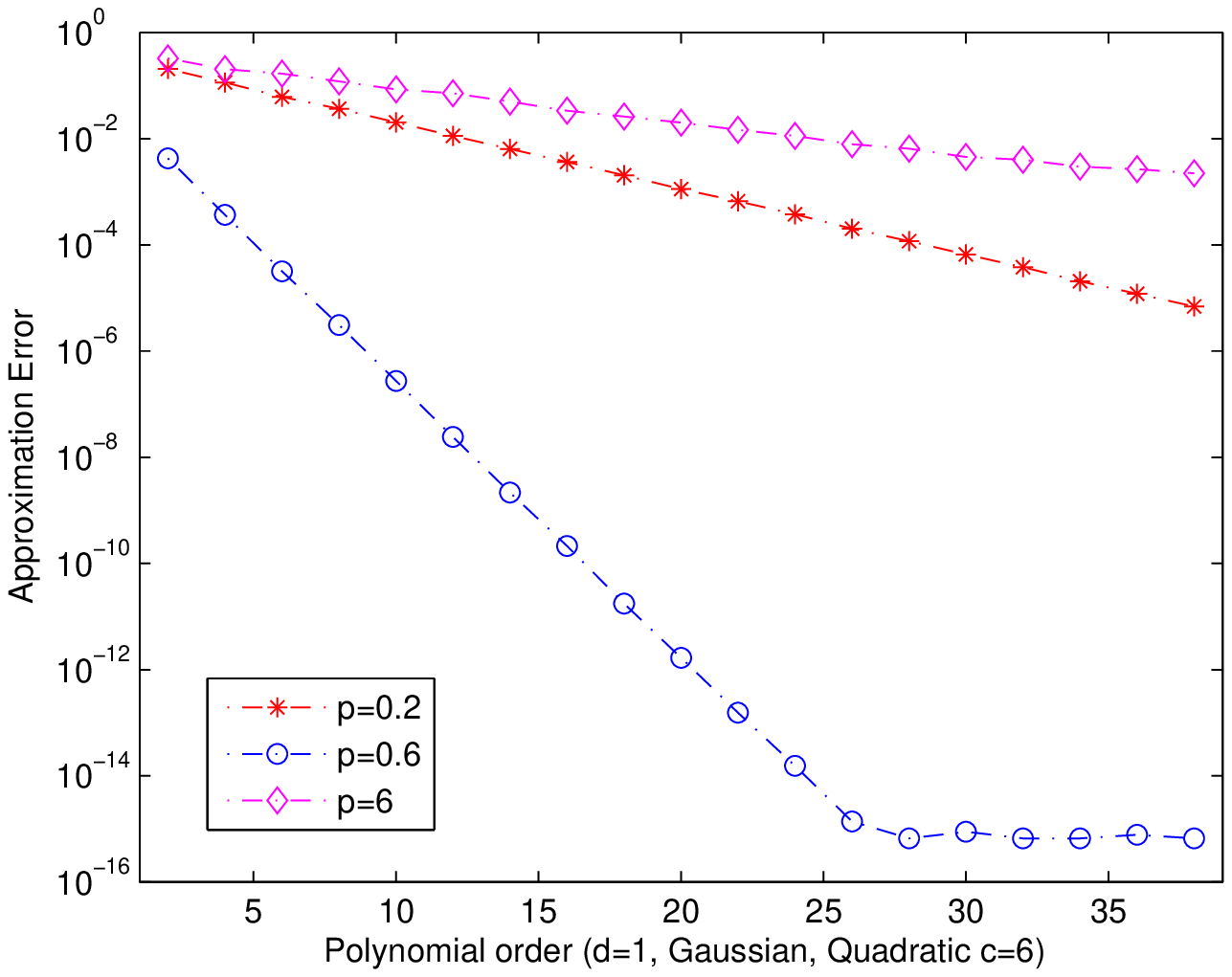}
\includegraphics[width=0.45\textwidth]{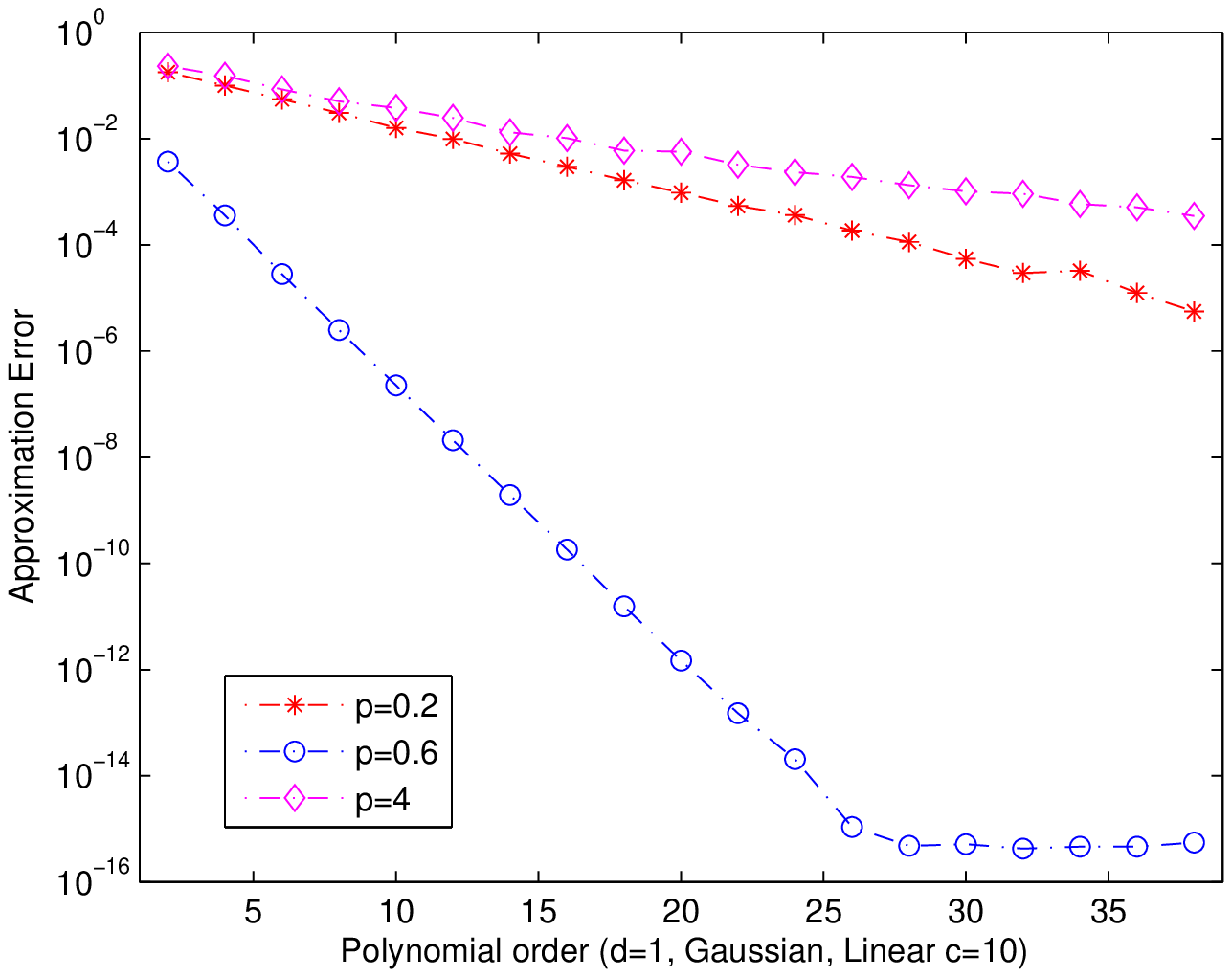}
\end{center}
\caption{\small
Approximation error for $f(y)= 2^{-p y^2}$ against the polynomial order with different parameter $p$. Left: quadratic rule
$m=c*(\# \Lambda)^2 $ with $c=6$. Right: linear rule with $c=10$. }\label{fig:AP_test}
\end{figure}

In this subsection, we shall investigate the convergence issue.
By the discussions in the last section, we know that we can use
the transformation parameter $L$ to obtain a
stable approach. Furthermore, inspired by the proof in
\cite{Cohen,Cohen2014}, one can expect the following convergence property of the least square approach
\begin{equation}
\mathbf{Pr}\left\{ ||f-f_m||_\rho \geq C \min_{v\in V} ||f-v||_{L^\infty(\mathbb{R})}  \right\} \leq 2m^{-r},
\end{equation}
with suitable norm $||\cdot||_\rho$ associate with the transformation $\rho(y)=1-\tanh^2(\frac{y}{L}),$ where $f_m$ is the least square solution.  As the proof follows directly the framework of \cite{Cohen2014}, and thus is omitted here. Although the above results implies the error estimate in the finite space $V,$ from the convergence point of view the rate of convergence ($\min_{v\in V}||f-v||$), may depend strongly on properties of the underlying function, such as the regularity and the decay rate.

To this end, we first demonstrate some numerical results for approximating
the function $f(y)=2^{-py^2}$ with a Gaussian parameter $y$ and a
constant $p$. In the following experiments, we will report
the error in the $L^\infty$ norm. More precisely, we compute the
maximum error on 4000 random grids in $\mathbb{R}.$
The approximation error using the Hermite functions against the polynomial order is given in Fig. \ref{fig:AP_test}.
In the computations,
the parameter $L=8$ is used to guarantee the stability.
It can be seen from Fig. \ref{fig:AP_test} that both the
linear rule $m=c(\# \Lambda)$ (Right) and the quadratic
rule $m=c(\# \Lambda)^2$ (Left) produce
very stable approach up to degree $q=38$.

Another simple observation is that although the function $f$ is sufficiently smooth for any values of $p,$  the convergence rate differs dramatically for $p.$ For $p=0.6$ ($\circ$ plot), the convergence is very fast, while for $p=0.2$ or $p=4$, the convergence is very slow (yet, still stable). This is due to the use of the Hermite functions which behave approximately
like $\textmd{e}^{-y^2/2}$ at infinity.  It is noted that when the approximated function $f(y)$ matches such a decay property (e.g. $p=0.6$ which is close to 0.5), the convergence is fast, while the convergence is very slow when the approximated function decays much faster or much slower than the Gaussian function (e.g.,
$p=0.2$ or $4$).

A remedy to fix the above problem is the use of the so-called
scaling factor  \cite{SWT,Scaling}.
In spectral methods, the scaling factor is often used to
speed up the convergence for approximating functions that decay
fast in infinity. Such an idea was successfully applied to
the studies of different problems \cite{Sun,Yau,CPC13}.

We now introduce the basic idea of the scaling factor.
To this end, let
$f(y)$ be a function that decay exponentially, namely,
\begin{equation}\label{eq:decay}
|f(y)|<\epsilon,  \quad \forall \;  |y|>M,
\end{equation}
where $0< \epsilon\ll 1$ and $M>0$ are some constants.
The idea of using the scaling factor is to expand $f$ as
\begin{equation}\label{eq:herfun_p=6}
f(y)=\sum_{n=0}^{K-1} c_n \tilde{H}_n(\alpha y) \,\,\Leftrightarrow\,\,
f\left(\frac{y}{\alpha}\right)=\sum_{n=0}^{K-1} c_n \tilde{H}_n(y),
\end{equation}
where $\alpha >0$ is a scaling factor. The key issue of using
$\alpha$ is to scale the points $\{y_i\}$ so that ${y_i}/{\alpha}$
are well within the effective support of $f.$

To see the effect of the scaling, let us carry out some numerical tests. We first consider a fast
decay function $f(y)=2^{-6y^2}$.
In the top of
Fig. \ref{fig:herfun_p=0.5}, the maximum approximation error
with respect to polynomial order is shown for
one-dimensional case. In the left of the figure, we fix
the parameter $L=8$ to ensure stability.
It is noticed that the convergence for the
original Hermite function approach $(\alpha=1,$ $\circ$ plot)
is very slow (although stable), while the use of a scaling factor $\alpha$ indeed can significantly improve the convergence rate.
In this example, the optimal scaling factor seems to be
around $\alpha=2.8$ ($\ast$ plot). The right of the figure
presents the convergence properties using
the scaling $\alpha=2.8$ but with variate parameters $L.$
It is noticed that, under small parameters ($L=0.5$ or 1),
the convergence rate deteriorate when large polynomial
order is used. This  is due to the instability when small
parameters $L$ are used. In contrast,
the parameter $L=8$ ($\diamond$ plot) results in very stable approach.

Let us now consider a slowly decaying function
$\tilde{f}(y)=2^{-0.2y^2}$.
The corresponding convergence results are shown in the bottom of Fig. \ref{fig:herfun_p=0.5}. The bottom left uses the
fixed parameter $L=8$ and several values of $\alpha$.
It is noticed that the optimal scaling factor in this case is about $\alpha=0.82$ ($\ast$ plot) in terms of rate of convergence, although the results for all $\alpha$ are stable.
The bottom right shows  the error curves using the optimal scaling $\alpha=0.82$ but with various
parameters $L.$ It is noticed that with small parameters ($L=1$ or 2, $\ast$ and $\circ$ plots) the convergence rate deteriorates when large polynomial order is used. In contrast, the parameter $L=8$ ($\diamond$ plot) results in very stable approach.

\begin{figure}[t]
\begin{center}
\includegraphics[width=0.45\textwidth]{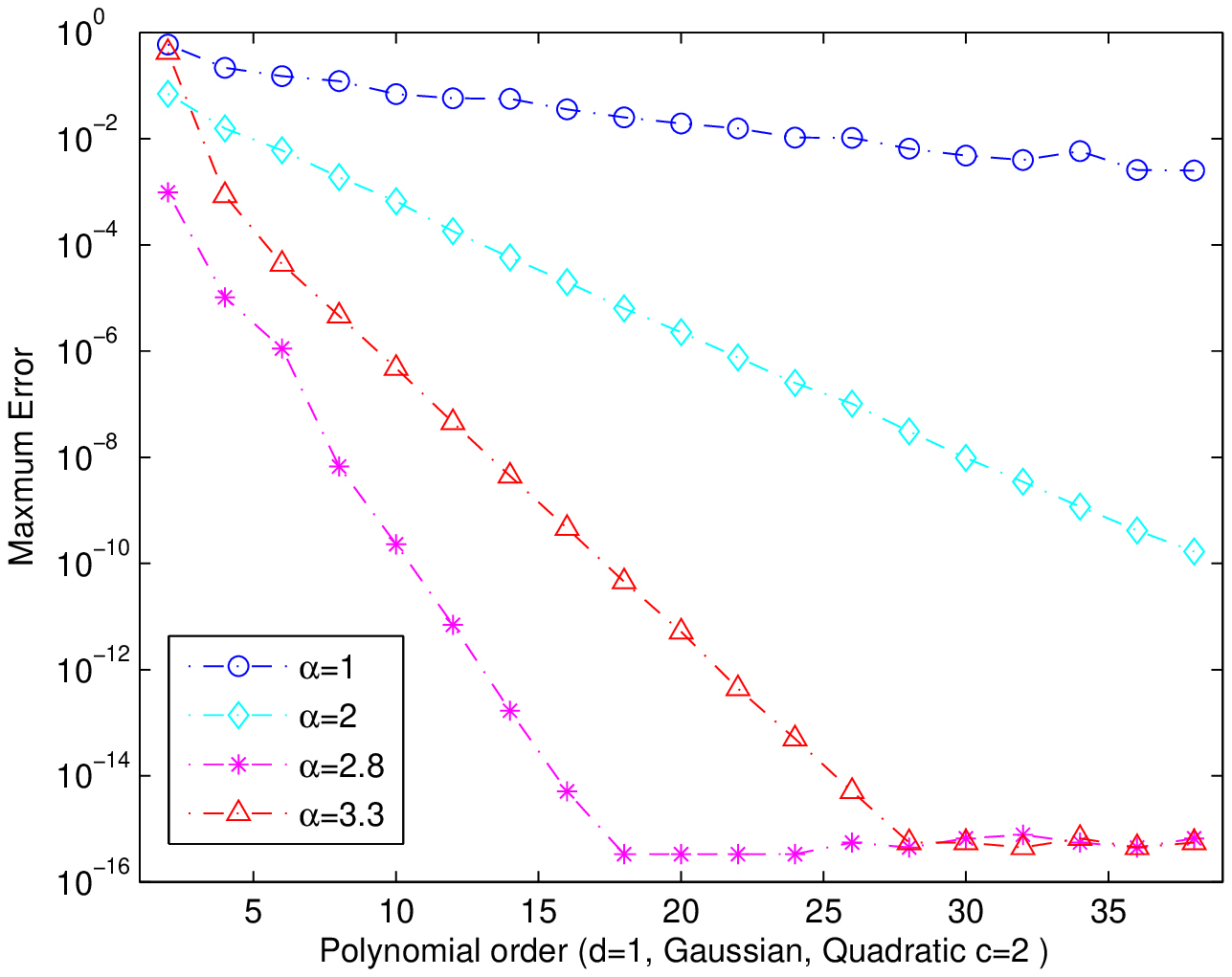}
\includegraphics[width=0.45\textwidth]{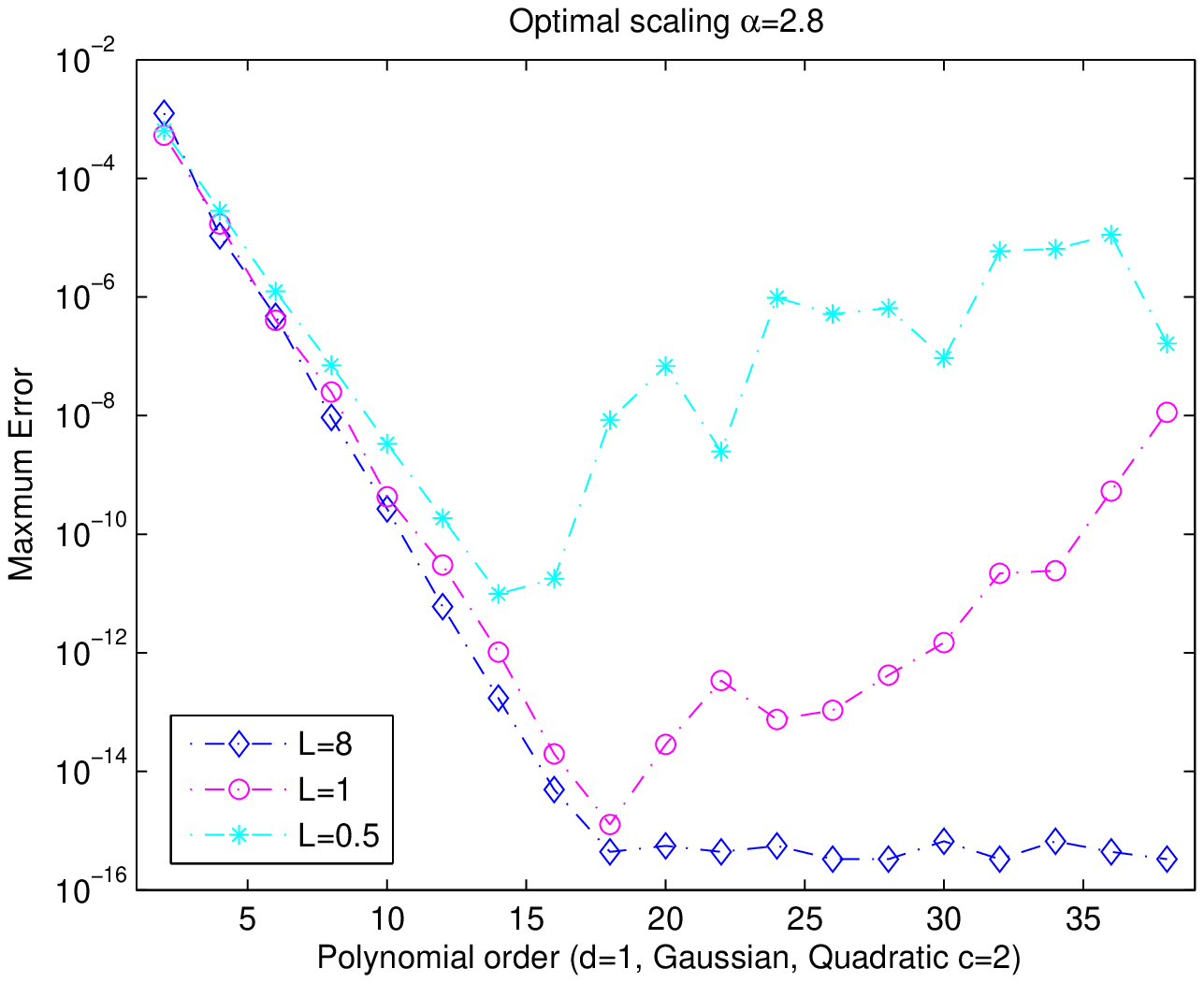}
\end{center}
\begin{center}
\includegraphics[width=0.45\textwidth]{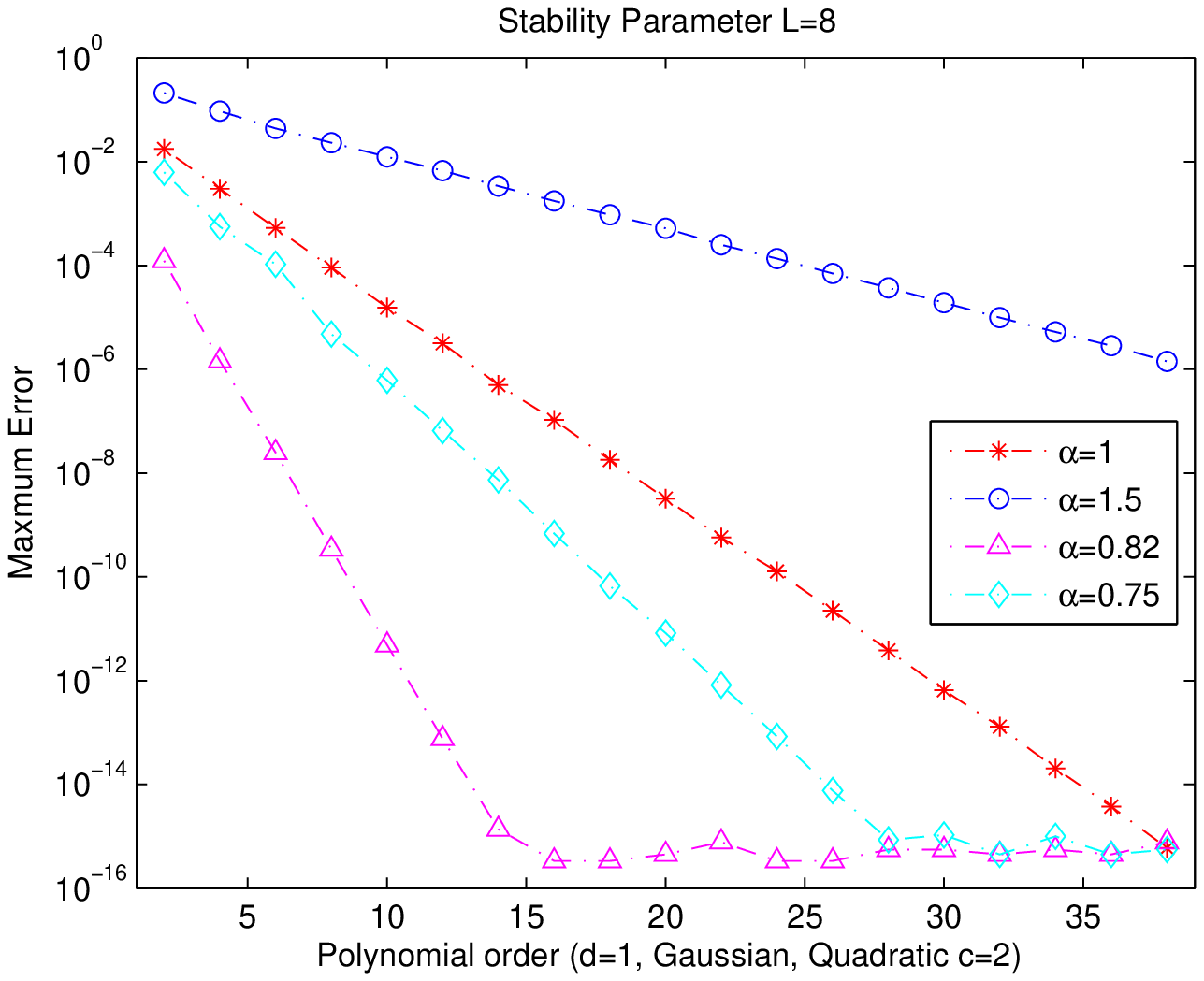}
\includegraphics[width=0.45\textwidth]{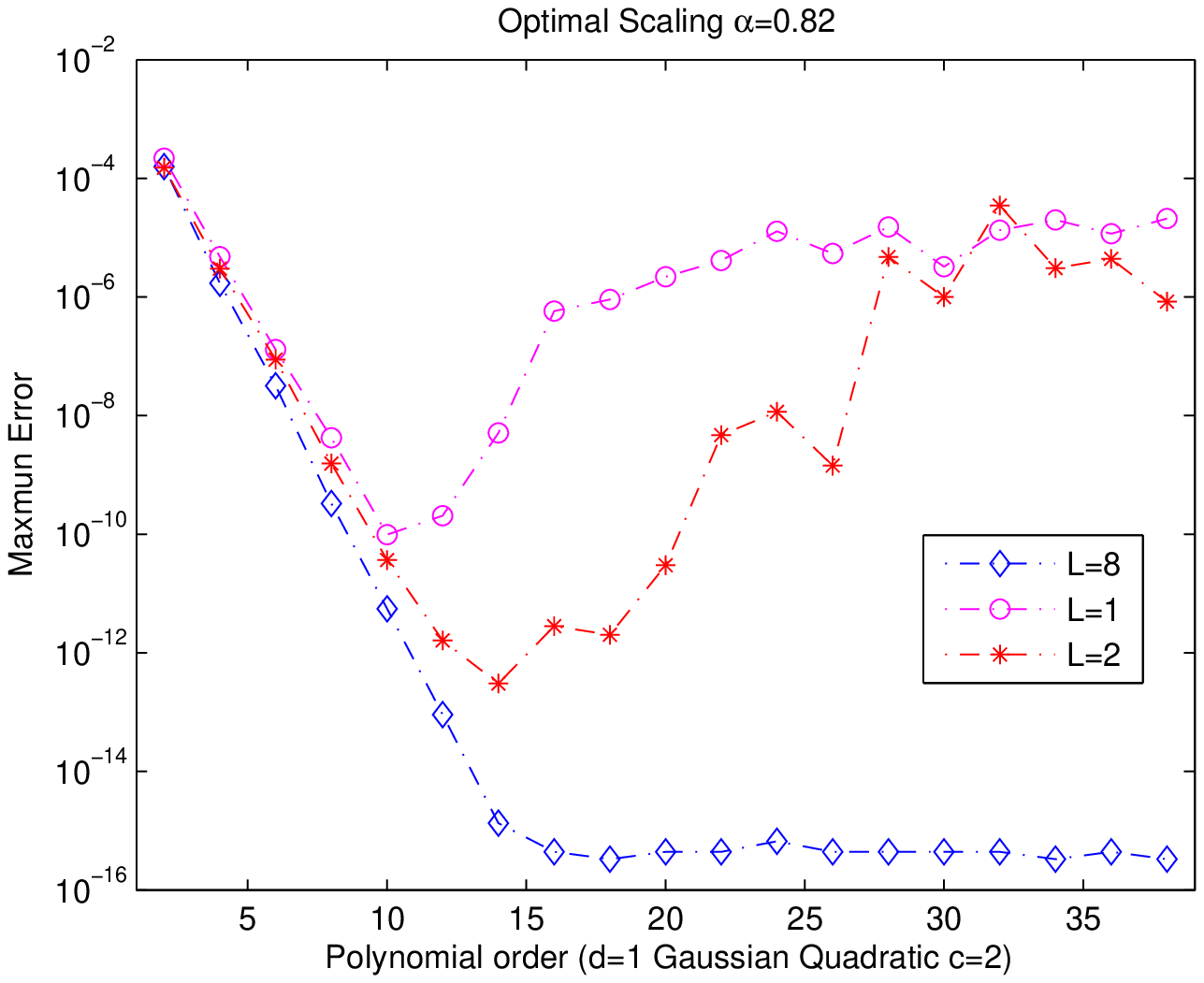}
\end{center}
\caption{\small
Convergence with respect to polynomial order ( 1D Gaussian
with $m=2*(\# \Lambda)^2$). Top: $f(y)=2^{-6 y^2}$. Left uses different scaling $\alpha$ with transformation parameter $L=8$, and right uses the  optimal scaling $\alpha=2.8$ with different parameter $L.$
Bottom: $f(y)=2^{-0.2 y^2}$. Left uses different scaling $\alpha$ with transformation parameter $L=8$, and right uses the  optimal scaling $\alpha=0.82$ with different parameter $L.$
}\label{fig:herfun_p=0.5}
\end{figure}

\subsection{Scaling factor: application to least square approach}

The above tests suggest that proper scaling factors should be employed to speed up the rate of convergence.  We now discuss how to find a feasible scaling in our least square approach. Note that the numerical solution (the expansion coefficients $\mathbf{c}$) satisfies
\begin{equation}
\mathbf{A} \mathbf{c} = \mathbf{f}
\end{equation}
with $\mathbf{A}$ being the design matrix, where
\begin{equation}
\mathbf{A}=\Big(\innerp{\tilde{H}_i, \tilde{H}_j}_m \Big)_{i,j=1}^N, \quad \mathbf{f}=\Big(\innerp{  f, \tilde{H}_j}_m \Big)_{j=1}^N.
\end{equation}
For ease of discussion, we assume that the points $\{y_i\}_{i=1}^m$
are in a absolute increase order, i.e.,
\begin{align*}
|y_1| \leq |y_2| \leq \cdot\cdot\cdot \leq |y_m|.
\end{align*}
Note that
 \begin{align}
\mathbf{f}_k= \left<f, H_k\right>_m = \sum_{i=1}^m f\!\left(\frac{y_i}{\alpha}\right) H_k(y_i), \quad k=1,...,N.
\end{align}

\begin{figure}[t]
\begin{center}
\includegraphics[width=0.45\textwidth]{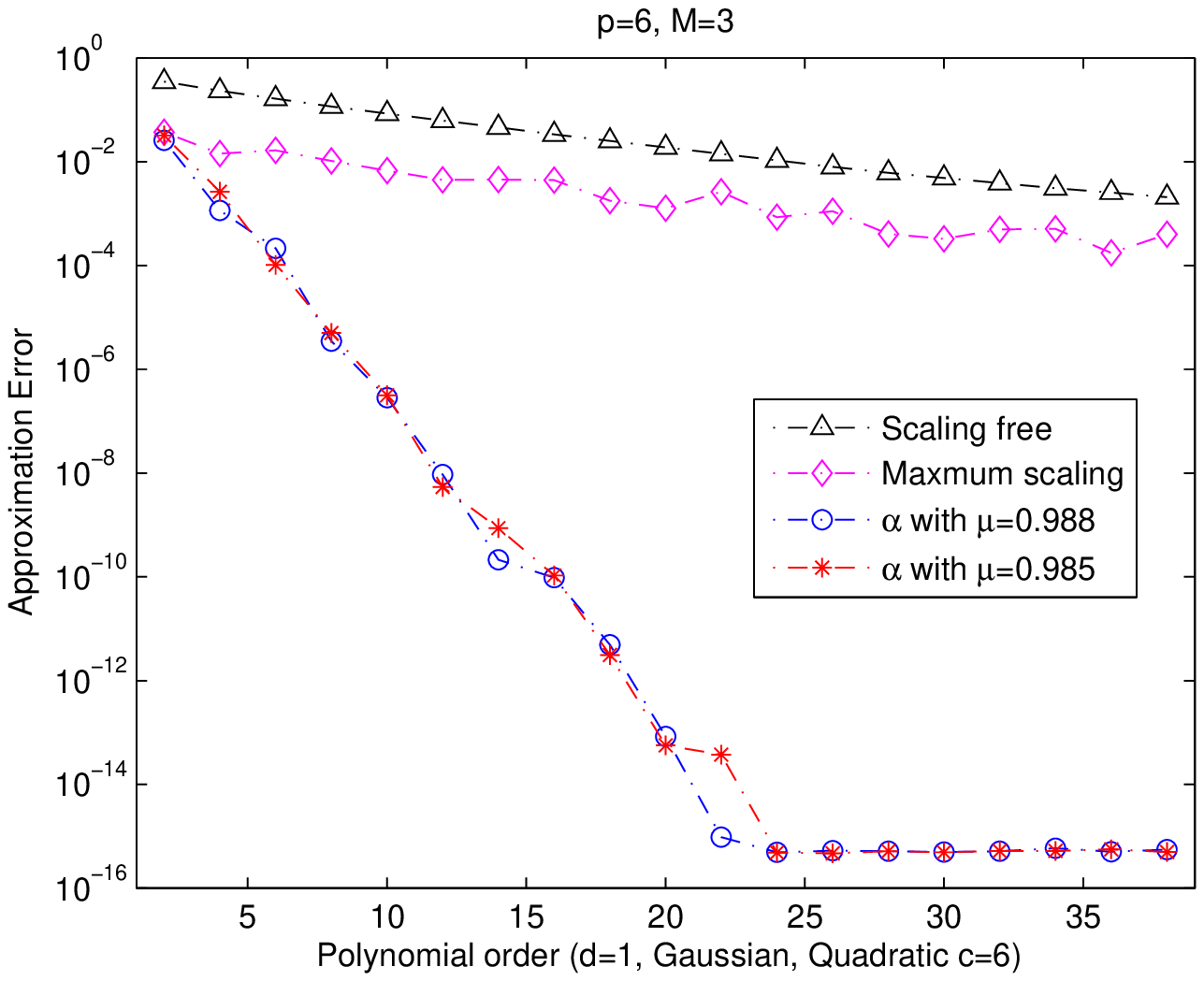}
\includegraphics[width=0.45\textwidth]{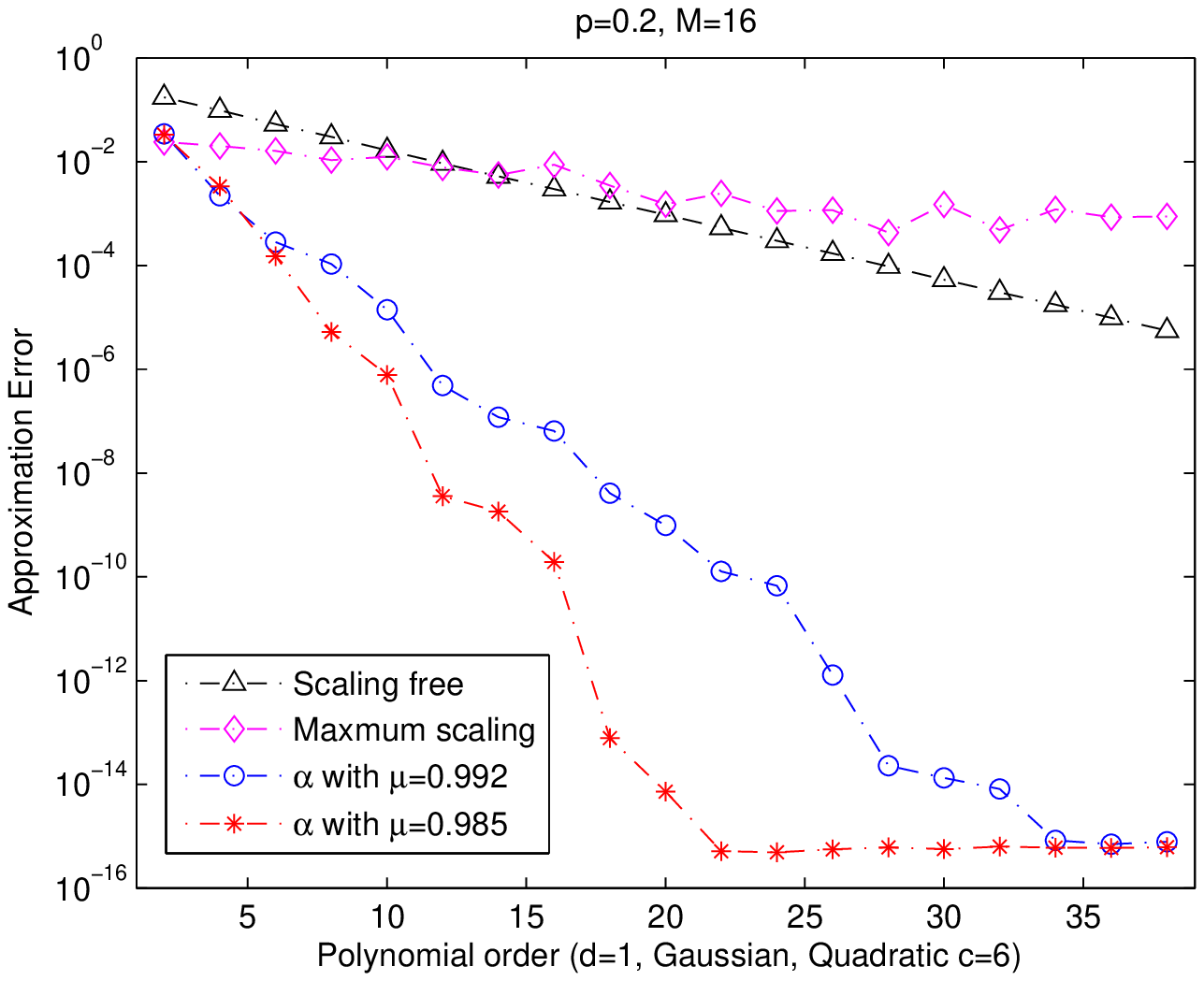}
\end{center}
\caption{\small
Error against polynomial order ($f(y)=2^{-py^2},$ 1D Gaussian, $m=6*(\# \Lambda)^2$). Left: $p=6$ with different scaling $\alpha.$ Right: $p=0.2$ with different scaling $\alpha.$}
\label{fig:herfun_scaling}
\end{figure}

\begin{figure}[h]
\begin{center}
\includegraphics[width=0.45\textwidth]{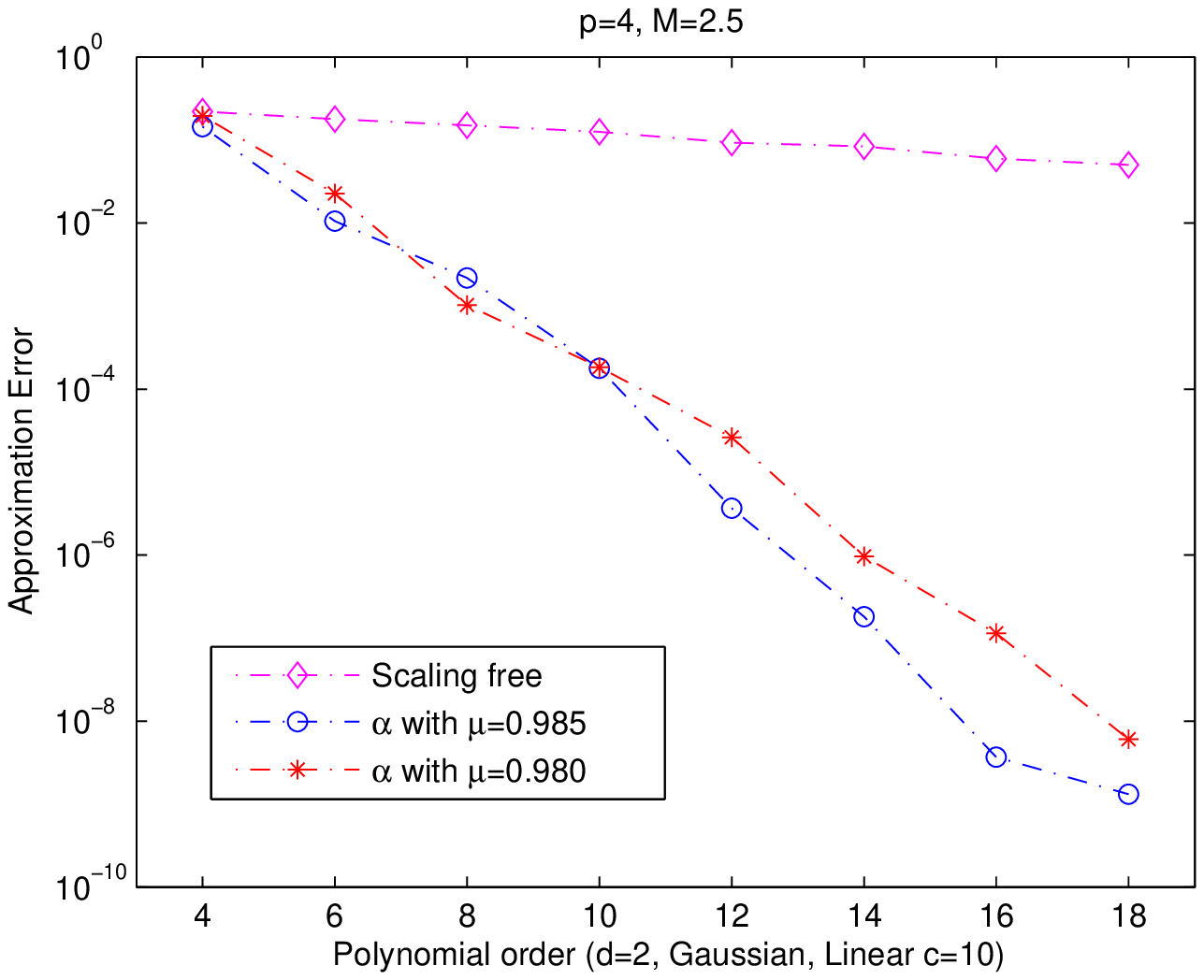}
\includegraphics[width=0.45\textwidth]{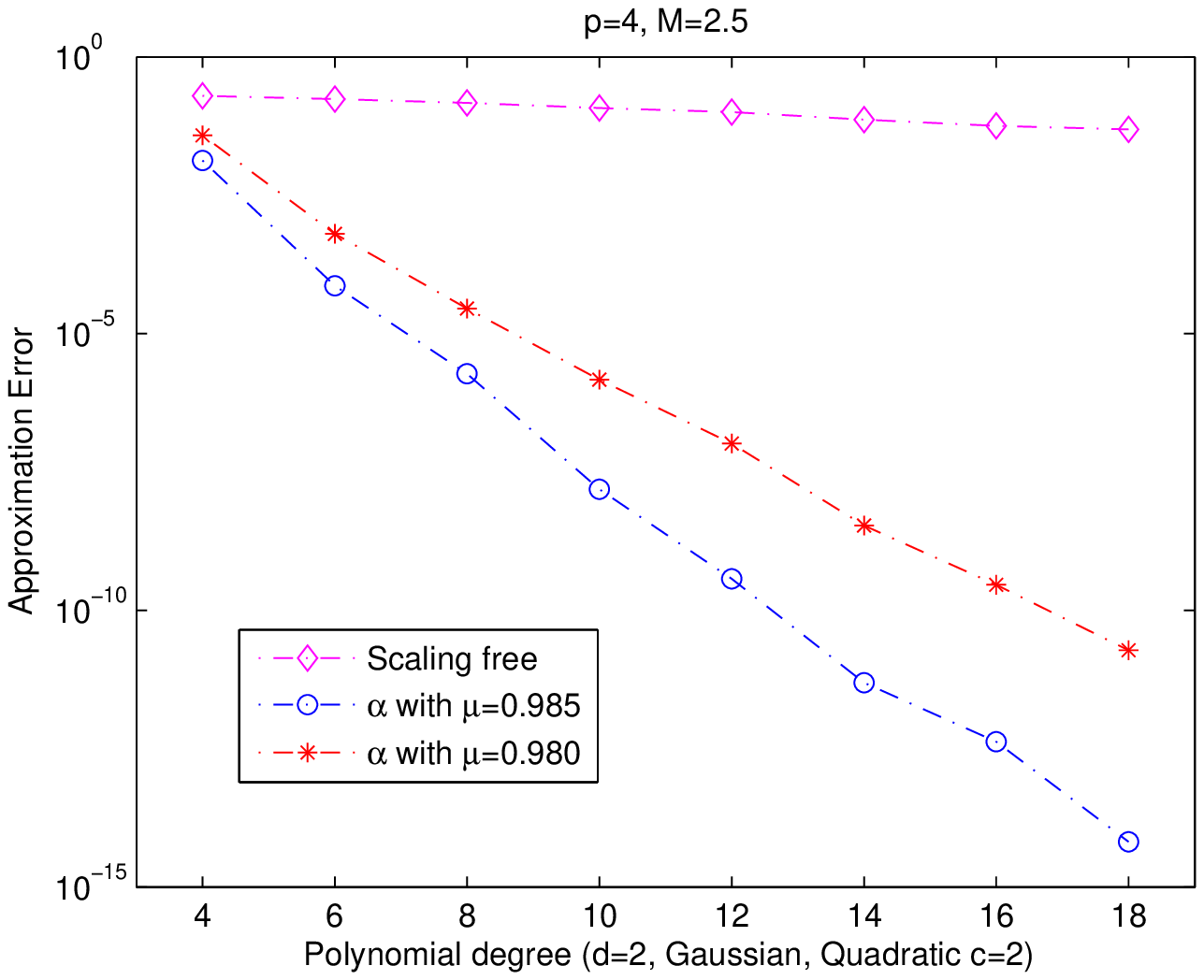}
\end{center}
\caption{\small
Numerical error against polynomial order ($\tilde{f}(y)=\textmd{e}^{-4(y_1^2+y_2^2)} \textmd{sin}(y_1+y_2),$ M=2.5, TD space.) Left: the linear rule with $m=10*(\# \Lambda)$ Right: the quadratic rule with $m=2*(\# \Lambda)^2.$}
\label{fig:herfun_2d_scaling}
\end{figure}

Clearly, in order to compute $\{c_k\}_{k=1}^N,$
we need to use information of $f$ from the interval $[-M,M]$ out of
which the contribution of $f$ is 0 in the sense of the
floating number. This observation suggests that
\begin{equation}\label{eq:guess}
\max_{1\leq j \leq m}\{|y_j|\}/\alpha \leq M \quad  \Rightarrow \quad \alpha = \max_{1\leq j \leq m}\{|y_j|\}/M.
\end{equation}
This idea is similar to the proposal given in \cite{Scaling}
in the context of pseudospectral methods. However, in our least
square approach the points $\{y_i\}_{i=1}^m$ are generated randomly.
The scaling $\alpha$ in (\ref{eq:guess}) may not be efficient from the
probability point of view: there is possibility that only few
points (may be only 2 or 3) are extremely large (we refer such points as \textit{bad points}), which means that the scaling
(\ref{eq:guess}) may \textit{over scale} the points.
This motivates us to {\em drop} such {\em bad points}.
More precisely, we choose
\begin{equation}\label{eq:quasi-optimal}
\tilde{\alpha} = \max_{1\leq j \leq \widetilde{m}}\{|y_j|\}/M,
\quad \tilde{m}= \lfloor \mu m \rfloor,
\end{equation}
where $\mu$ is a parameter close to 1. That is, we drop $m-\lceil \mu
m \rceil$ possible \textit{bad points}, and require
$\lfloor \mu m \rfloor$ points to contribute to  the computation of
$\{c_k\}_{k=1}^N.$
In practice, it is found that we can just set $\mu\sim 98\%,$,
 meaning that the probability of generating bad points is $2\%.$

We now repeat the numerical test in Fig.
\ref{fig:herfun_p=0.5} (the left ones), with particular attention
to the use of the scalings (\ref{eq:guess}) and (\ref{eq:quasi-optimal}).
The numerical results are given in Fig. \ref{fig:herfun_scaling},
where \textit{scaling free} stands for the results
without using a scaling, \textit{maxmum scaling} denotes the
scaling computed by (\ref{eq:guess}),
while \textit{scaling with $\mu=s\%$} means that the scaling
is computed by (\ref{eq:quasi-optimal}).
The left of Fig. \ref{fig:herfun_scaling} shows the convergence for approximating $\tilde{f}(y)=2^{-6y^2}$. In this case,
we simply set $M=3$, i.e., the effective support of $f(y)$ is chosen as $[-3,3]$. It can be seen that the numerical error with
scaling factor (\ref{eq:quasi-optimal}) decays  very fast
($\ast$ and $\circ$ plots) as compare to the \textit{scaling free}
case ($\triangleright$ plot),  while the results with
\textit{maxmum scaling} ($\diamond$ plot) behaves almost the same
as the \textit{scaling free} case .
The right plot is for $\tilde{f}(y)=2^{-0.2 y^2},$ and we
set $M=16$ for this test. A similar phenomenon is observed.

For high dimensional cases, a reasonable scaling should be chosen in
each direction. A two-dimensional test is provided in Fig.
\ref{fig:herfun_2d_scaling}. The function to be approximated is $\tilde{f}(y)=\textmd{e}^{-4(y_1^2+y_2^2)} \textmd{sin}(y_1+y_2),$ and the approximating space is the TD space. In the left plot, we have used the linear rule  $m=10*(\# \Lambda),$ while the quadratic rule with $m=2*(\# \Lambda)^2$ is used in the right plot. The scaling factors are computed by (\ref{eq:quasi-optimal}) with $M=2.5.$ It is shown that the convergence is stable, and the scaling works very well. Furthermore, it is noticed that the convergence rate of the quadratic rule (right) is better than the linear rule. This might be due to that the linear rule uses less points than the quadratic rule.

\begin{remark}
{\em
In practice, finding the optimal parameter $M$ is
not straightforward due to the limit information of the function $f$. Nevertheless, we can always find a reasonable $M$ if the information for $f$ is reasonably sufficient.
It remains a research issue on how to find acceptable $M$
if only a few evaluations of $f$ are available;
we will leave this problem for future studies.
}
\end{remark}

\section{Parametric UQ: illustrate examples}
\setcounter{equation}{0}

In this section, we discuss the application of the least square Hermite (Laguerre) approximations to parametric uncertainty analysis, precisely, we shall use the least square approach based on the Hermite (Laguerre) functions to compute the QoI of UQ problems.

\subsection{A simple random ODE}
We first consider a simple random ODE problems with Gamma random input:
\begin{equation} \label{ode}
\frac{d f}{d t} = - k(y) f,\quad
f(0)=1,
\end{equation}
where $k(y)$ is a function with respect to a random Gamma parameter $y.$ Note that for such problems with Gamma random input, the Laguerre functions will be used as the bases. To illustrate the idea, we set $k(y)=\beta y.$ We are interested in the second moment of the solution, i.e.,
$$
\textmd{QoI}=\int_{{\mathbb{R}}_+} \! \textmd{e}^{-y} f^2(t,y) dy.
$$
Note that in the least square approach, for each random point $y_i,$ one has to solve the ODE to get the information $f(t, y_i).$ The random points that located in $(0,\infty)$ used here is the transformed uniform random points. We will use the mapping (\ref{eq:mapping}) with parameters $r=1$ and $L=64$ to guarantee the stability. The numerical convergence results are shown ($t=1$) in Fig. \ref{fig:lagfun_scaling}. The left plots are for $\beta=1.5.$ Note that we are in fact approximating the function $\tilde{f}=\textmd{e}^{-y} f^2(t,y)=\textmd{e}^{-4yt}.$ It is noticed from Fig. \ref{fig:lagfun_scaling} that the convergence is very slow without using a scaling, and this is again due to the fast decay of $f$ compared to the Gamma measure. In this test, both the \textit{maximum scaling} and the \textit{scaling with} $\mu=s\%$ work well, which is different with the observations for the Gaussian measure. It is likely due the slow decay of the Gamma measure, which results in a very big effective support
(outside of the effective support $\tilde{f}$ is 0 with the machine accuracy) , and thus, the probability of \textit{over scale} is not so large as in the Gaussian case. The right plot is for $\beta =-0.65.$ Again, all scaling values work well,
although the scaling computed by (\ref{eq:quasi-optimal}) behave more stable.

\begin{figure}[h]
\begin{center}
\includegraphics[width=0.45\textwidth]{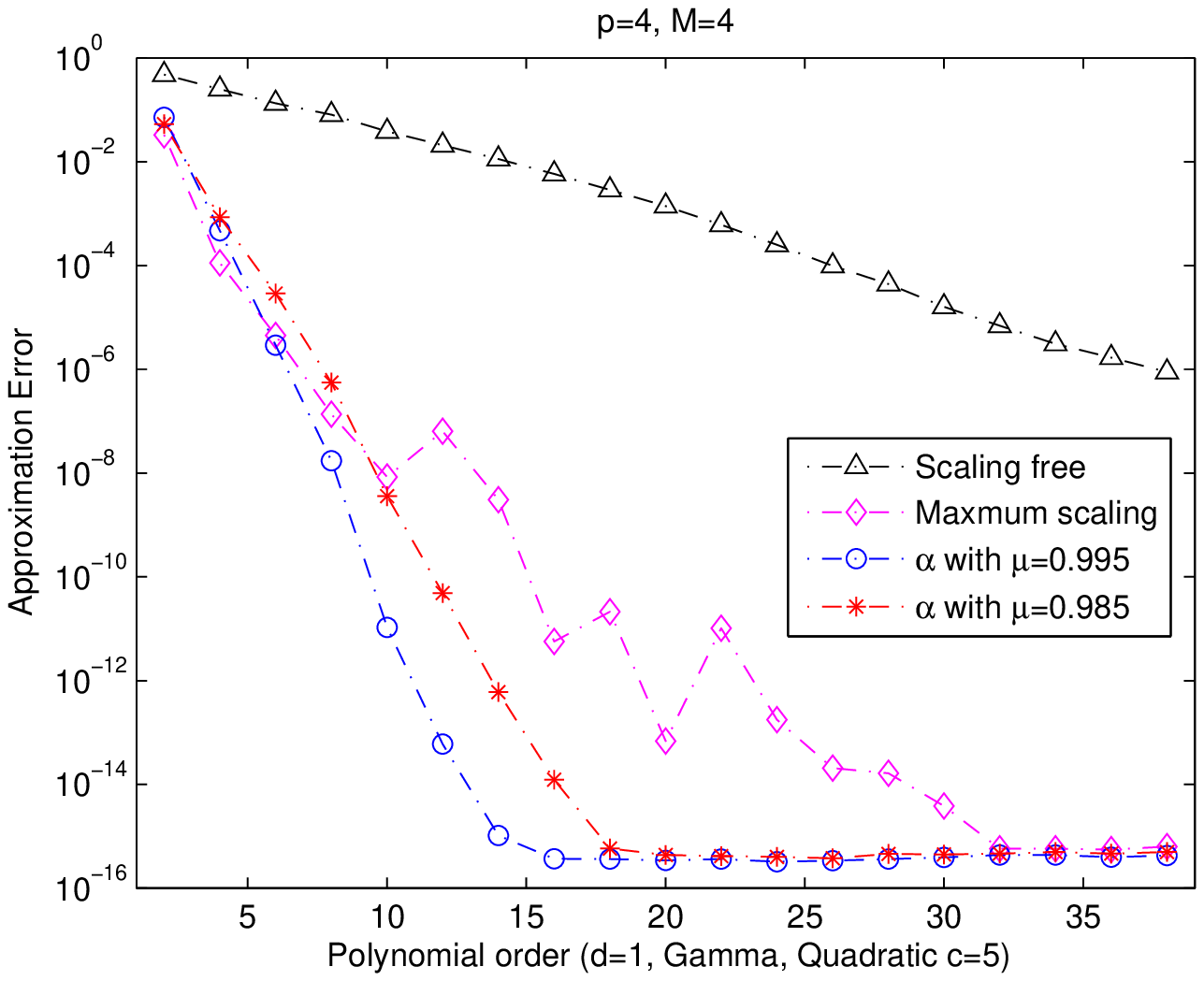}
\includegraphics[width=0.45\textwidth]{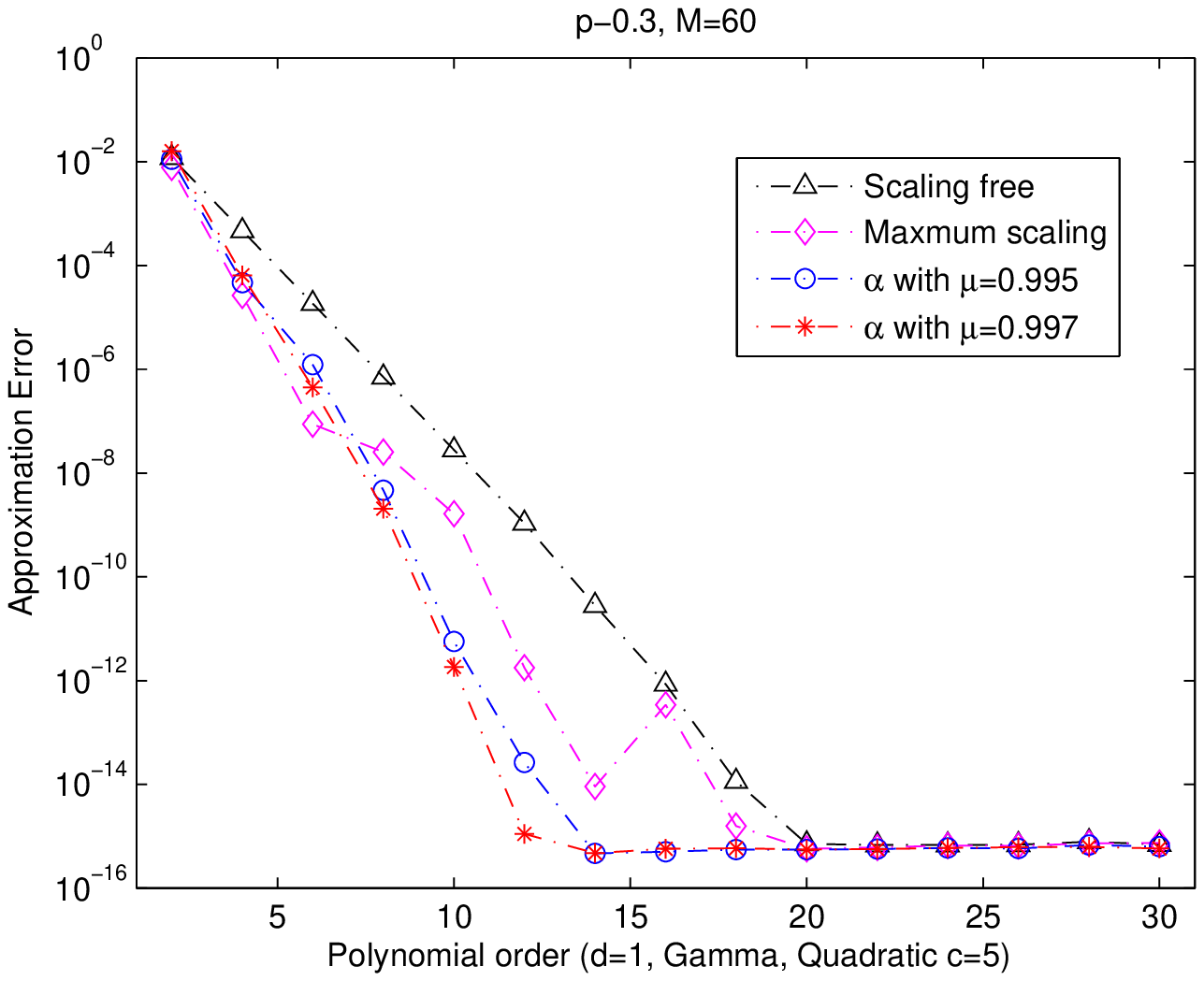}
\end{center}
\caption{\small Problem (\ref{ode}):
Convergence with respect to polynomial order with $m=5*(\# \Lambda)^2$. Left: $\beta=2$, with different scaling $\alpha.$ Right: $\beta=-0.65$, with different scaling $\alpha.$}
\label{fig:lagfun_scaling}
\end{figure}

It is seen from the above example that for problems with Gamma random parameters the \textit{maxmum scaling} can be used.
Moreover, if the partial maximum scaling associated
with parameter $\mu$ is used, then larger $\mu$ (say $\mu=0.995$) should be used.
This is quite different with the Gaussian case.

\subsection{Elliptic problems with lognormal random input}

We now take the following elliptic problems with lognormal random
input as an example
\begin{align}\label{eq:lognormal}
&-\nabla \cdot (a(x,\omega) \nabla u)=f,\quad x\in D,\,\, \,\omega \in \Omega,\nn\\
&u(x,\omega)|_{\partial D}=0.
\end{align}
The coefficient $a(x,\omega): \vec{D}\times \Omega \rightarrow \mathbb{R}$ is a lognormal random field, i.e.,
\begin{equation}
a(x,\omega)=\textmd{e}^{\gamma(x,\omega)}, \quad \gamma(x,\omega) \sim N(\mu,\sigma^2), \quad \forall x\in D,
\end{equation}
where $N(\eta,\sigma^2)$ denotes a Gaussian probability distribution with expected value
$\eta$ and variance $\sigma^2,$ and $\gamma(x,\omega): D \times\Omega \rightarrow \mathbb{R}$ is such that for $x, x' \in D$ the
covariance function $C_\gamma(x, x') = \mathbb{C}\textmd{ov} [\gamma(x, \cdot)\gamma(x', \cdot)]$ depends only on the distance
$||x - x'||$ (isotropic property). Moreover, $C_\gamma(x, x') = C_\gamma(||x - x||)$ is Lipschitz continuous, and is a positive definite function.

Several types of the covariance function $C_\gamma$ have been proposed in the literature \cite{lognormal}. Such as the exponential correlation function
$$C_\gamma(x, x')=\sigma^2 \textmd{exp}
\left(- \frac{||x-x'||_1}{L_c^2}\right),$$
and the Gaussian function
$$C_\gamma(x, x')=\sigma^2 \textmd{exp}
\left(- \frac{||x-x'||^2}{L_c^2}\right),$$
where $L_c> 0$ is called correlation length.

The well-posedness of the lognormal problem (\ref{eq:lognormal}) has been investigated
in \cite{Wellpose}. The optimal convergence rate of its gPC approximation and QMC approach has also been analyzed theoretically in \cite{Schwab,QMC_log}. To solve the problem, one first transform the original problem into a finite model, by means of the Karhunen-L$\grave{\mathrm{o}}$eve expansion:
\begin{equation}\label{eq:kl}
a^N(x,\omega) \approx \bar{a}(x)+ \sum_{i=1}^N \sqrt{\lambda_i} y_i(\omega)a_i(x),
\end{equation}
where $\{\lambda_i\}_{i=1}^\infty$ and $\{a_i\}_{i=1}^\infty$
are the eigenvalues and orthogonal eigenfunctions of
$C_\gamma(x, x')$ i.e.,
\begin{align*}
\int_{D\times D}
C_\gamma(x, x')a_i(x)dx=\lambda_ia_i(x').
\end{align*}
Apart from using (\ref{eq:kl}), other techniques such as Fourier expansion \cite{lognormal} can be used.

\begin{figure}[h]
\begin{center}
\includegraphics[width=0.45\textwidth]{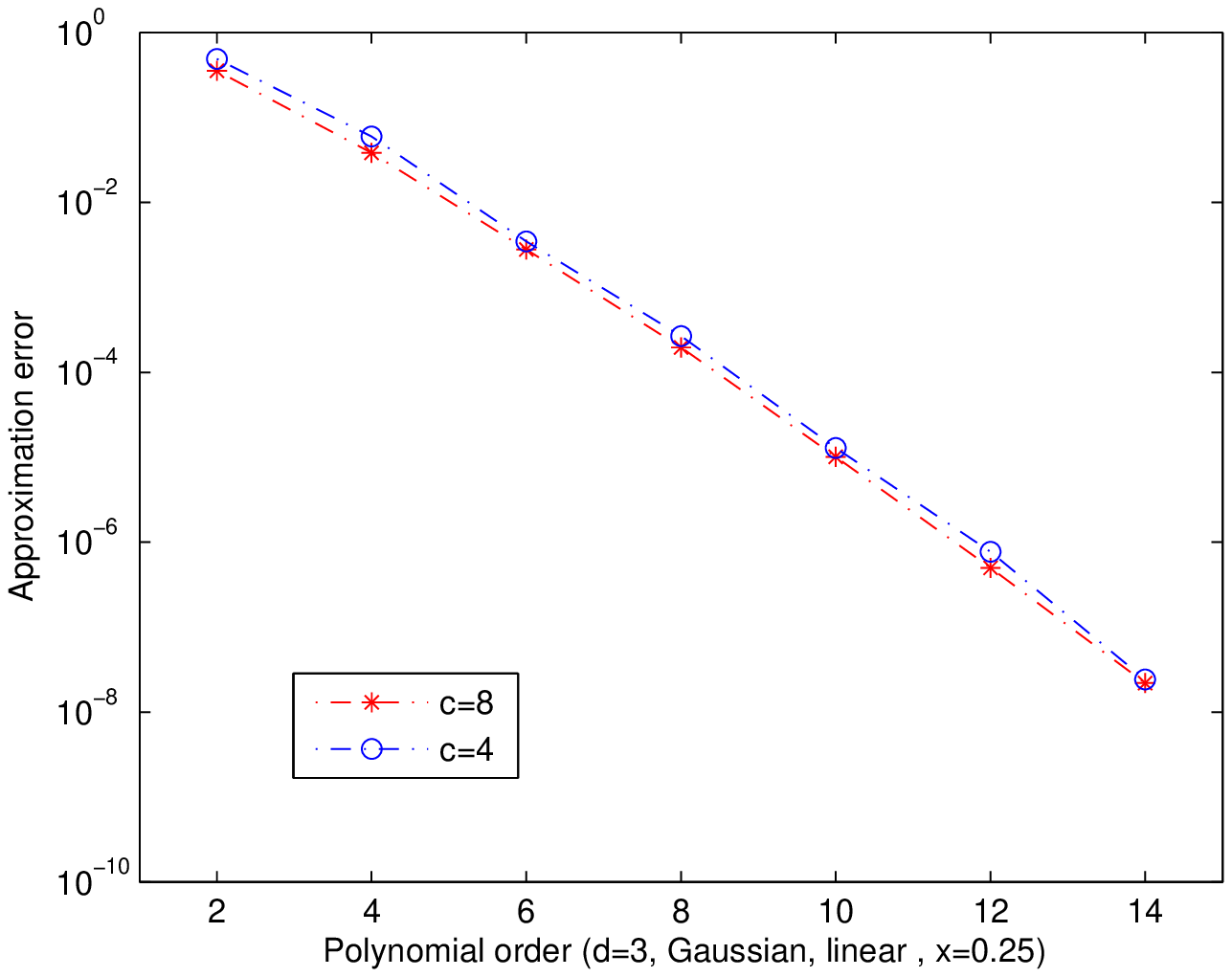}
\includegraphics[width=0.45\textwidth]{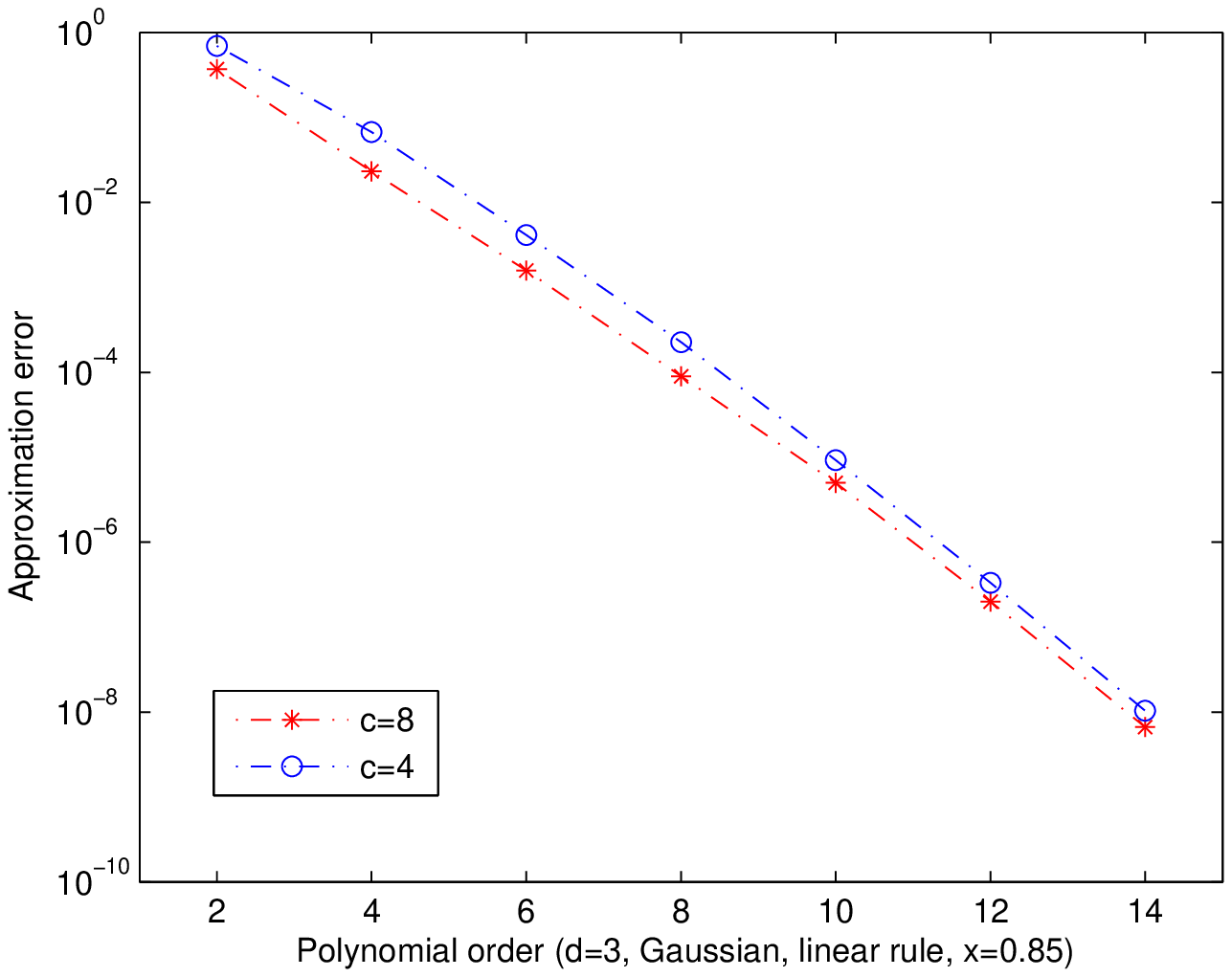}
\end{center}
\caption{Problem (\ref{eq:lognormal}) with random coefficient
(\ref{eq:lognormal_example}): Convergence with respect to polynomial order with the linear rule $m=c*(\# \Lambda).$ Left: $x_0=0.25.$ Right: $x_0=0.85.$}\label{fig:elliptic}
\end{figure}

Here, we consider the least squares approach to obtain the QoI of problem (\ref{eq:lognormal}) with finite parameters random coefficient (\ref{eq:kl}). Let us have a close observation to the following simple case:
\begin{equation}
\label{4e5}
-\nabla \cdot (e^{c y} \nabla u)=\sin(\pi x),
\end{equation}
where $y$ is a Gaussian random variable and $p$ is a constant.
The exact solution is $u=e^{-cy}{\sin(\pi x)}/{\pi^2}.$ In our least
square framework,  we wish to expand the function
\[
\tilde{u}=e^{-\frac{y^2}{2}}u= e^{-\frac{(y+c)^2}{2}}
e^{\frac{c^2}{2}}\frac{\sin(\pi x)}{\pi^2} ,
\]
 which admits similar decay property as the density function
$e^{-{y^2}/{2}}.$  Consequently, a scaling is {\em not} needed
and the standard Hermite {\em function} approximation
without scaling  should work.
In fact, it follows from the maximum principle that the solutions of
(\ref{4e5}) or (\ref{eq:kl})) are bounded. Therefore, Hermite function
approach without scaling should work well.

We now consider problem (\ref{eq:lognormal}) with the following random coefficient
\begin{equation}\label{eq:lognormal_example}
a^N(x,\omega) = y_0+ \frac{1}{2}\Big( y_1\cos(\pi x) + y_2 \sin(\pi x)\Big),   \quad x\in [0,1]
\end{equation}
with $y_i \sim N(0,1), i=0,1,2.$ That is, three Gaussian parameters are used.  We believe that the exact solution of this problem has Gaussian decay profile similar to the above
simple illustration, and we will use the least square approximation with the non-scaling Hermite function approach.
Suppose we are interested in the QoI:
\begin{equation}
{\rm QoI}= \int_{\Gamma} e^{-\frac{\mathbf{y}^2}{2}} u^2(x_0,
\mathbf{y}) d\mathbf{y}.
\end{equation}
 In the computations, the elliptic equations are solved by standard finite element method. As the exact solution is not available, we use a high level sparse grid collocation method to obtain the reference solution. The numerical error using the least square approximation with respect to the bases order are shown in Fig. \ref{fig:elliptic}. The linear rule $m=c*(\# \Lambda)$ is used, and different $x_0$ are considered. As discussed above, Hermite function approach without scaling indeed works well; even the linear rule gives very good convergence rate.

We close this section by pointing out that
only two illustrative examples are provided to demonstrate
the performance of the least squares approximation with
Hermite (Laguarre) functions for solving the UQ problems.
In fact, practical problems in UQ can be very complicated,
and we may need to solve problems with very high dimensional
parameters. An alternative way to handle high dimensional
problems is to use the $L^1$ minimization framework \cite{L1}
instead of the least squares approach. However,
such framework relies on the assumption that the
solution admits certain \textit{sparse} structure,
and this will be part of our future studies.

\section{Conclusions}

In this paper, we investigate the problem of approximating
multivariate functions in unbounded domains by using
discrete least-squares projection with random points evaluations.
We first demonstrate that the traditional Hermite (Laguerre)
polynomials chaos expansion suffers from the numerical
instability in the sense that \textit{unpractical} number of points, i.e. $(\#\Lambda)^{c\#\Lambda}$, is  needed to guarantee the
stability in the least squares framework.
To improve this, we propose to use the Hermite (Laguerre)
{\em functions} approach. Then the mapped uniformly distributed random points are used to control the condition number of the design matrices.
It is demonstrated that with the Hermite (Laguerre)
functions approach the stability can be much improved,
even if the number of design points scales
\textit{linearly} with the dimension of
the approximation space. On the other hand,
for problems involving exponential decay
the convergence may be very slow due to the poor conrvergence
property of the Hermite (Laguerre) polynomial/function approach.
To improve this, scaling factors are investigated
to accelerate the convergence rate. This is particularly useful
if the underlying function to be approximated decay
much faster or much slower than that of the Gaussian (Gamma)
measure. A principle for choosing the quasi-optimal scaling factor
is provided.  Applications to parametric UQ problems
are illustrated.

We emphasize that for approximating
multivariate functions in unbounded domains by using
discrete least-squares projection two parameters are involved:
one is the transformation parameter $L$ in (\ref{eq:mapping}), and another is the scaling factor $\alpha$ in (\ref{eq:herfun_p=6}).
The transformation parameter $L$ is used to control the stability
while the scaling factor $\alpha$ is used to control the rate of
convergence. In this work, as the sample points in the least
square approach are generated randomly, an idea of dropping bad points is used, which lead to a useful
formula (\ref{eq:quasi-optimal}).

There are, however, a number of important issues
deserving further attention, which are listed below.

\begin{itemize}
\item {\em Optimal mapping}.
In this work, we used a class of mapping (\ref{eq:mapping})
to transform the uniform random points in a bounded domain
to unbounded domains, where a parameter $L$ is used to
control the condition number.
Are there better mappings that even work well with a linear rule?

\item {\em Optimal scaling.}
The scaling factor $\alpha$ given in Section 3.2 is determined by
the size of the effective support, i.e., $M$. If the data
information is sufficiently large then $M$ can be easily obtained.
In the UQ problems large data
information means a significant amount of computational time
for solving differential equations.
One possible remedy is to use less accurate but fast
(even parallel) solvers, as a
rough $M$ should serve the purpose. This remains to be examined.

\item{\em The correlation of mapping and scaling.}
 Is there any correlation between $L$ and the scaling factor
$\alpha$?

\item {\em High dimensions}.
If the underlying solution admits certain \textit{sparsity} structure, we may use the $L^1$ minimization framework instead of the least-squares approach to further enhance
the computational efficiency.
This topic with suitable transformation and scaling should be
studied.
\end{itemize}

\section*{Acknowledgment}
 The research of T. Tang is supported by Hong Kong Research
Grants Council (RGC), Hong Kong Baptist University, and an
NSFC-RGC joint research grant. The research of T. Zhou is supported by the National Natural Science Foundation of China (No.91130003 and No.11201461).

\end{document}